\documentclass[12pt]{amsart}
\usepackage{latexsym,fancyhdr,amssymb,color,amsmath,amsthm,graphicx,listings}
\newtheorem{thm}{Theorem}        
\newtheorem{lemma}{Lemma}           
\def\obs#1{ \vspace{2mm} \parbox{15.2cm}{{\bf Observation:} #1} }
\def\quest#1{ \vspace{2mm} \parbox{15.2cm}{{\bf Question:} #1} }
\usepackage[section]{placeins}  \setlength{\textwidth}{17cm}    
\let\paragraph\subsection    \newcommand{\E}{{\rm E}}
\setlength{\oddsidemargin}{5mm} \setlength{\evensidemargin}{5mm}\setlength{\parindent}{0mm}
\setlength{\topmargin}{0mm}    \setlength{\headheight}{0mm}   \setlength{\textheight}{24.5cm}

\title{A Dehn type quantity for Riemannian manifolds} 
\author{Oliver Knill} 
\date{02/31/2020}
\address{Department of Mathematics \\ Harvard University \\ Cambridge, MA, 02138 }


\begin{document}
\begin{abstract}
We look at the functional $\gamma(M) = \int_M K(x) dV(x)$ for 
compact Riemannian 2d-manifolds $M$, where 
$K(x) = (2d)! (d!)^{-1} (4\pi)^{-d} \int_T \prod_{k=1}^d K_{t_{2k},t_{2k+1}}(x) dt$
involves products of $d$ sectional curvatures $K_{ij}(x)$ 
averaged over the space $T \sim O(2d)$ of all orthonormal frames $t=(t_1,\dots,t_{2d})$. 
A discrete version $\gamma_d(M)$ with 
$K_d(x) = (d!)^{-1} (4\pi)^{-d} \sum_\sigma \prod_{k=1}^d K_{\sigma(2k-1),\sigma(2k)}$
sums over all permutations $\sigma$ of $\{1,\dots,2d \}$.
Unlike Euler characteristic which by Gauss-Bonnet-Chern is $\int_M K_{GBC} \; dV=\chi(M)$,
the quantities $\gamma$ or $\gamma_d$ are in general metric dependent.
We are interested in $\delta(M)=\gamma(M)-\chi(M)$ because if $M$ 
has curvature sign $e$, then $\gamma(M) e^d$ and $\gamma_d(M)$ are positive while $\chi(M) e^d>0$ 
is only conjectured. We compute $\gamma_d$ in a few concrete examples like 2d-spheres, 
the $4$-manifold $\mathbb{CP}^2$, the $6$ manifold $SO(4)$ or the $8$-manifold $SU(3)$. 
\end{abstract}

\maketitle

\section{The functional}

\paragraph{}
When studying manifolds with definite curvature sign, one is naturally led to quantities like
$$ \int_{M} \frac{1}{d!(4\pi)^d} \sum_{\pi \in S_{2d}} \prod_{k=1}^d K_{\pi(2k-1) \pi(2k)}(x) \; dV(x) $$
on compact Riemannian $2d$-manifolds $M$ involves curvatures of pairwise 
perpendicular coordinate $2$-planes in the tangent space $T_xM$. The definition makes use of an
orthonormal coordinate system in $T_xM$ and depends in general on it, like on how $M$ is parametrized.
Averaging over a one parameter rotation in $T_xM$ makes it equal to the Haar average over the 
Stiefel manifold $T \sim O(2d)$ of orthonormal frames and so coordinate independent. 
We first thought that $\gamma(M,g)= \int_M K(x) \; dV(x)$ with
$$  K(x) = (2d)! (d!)^{-1} (4\pi)^{-d} \int_T \prod_{k=1}^d K_{t_{2k},t_{2k+1}}(x) dt $$
is independent of the Riemannian metric $g$ chosen on $M$ and that 
the question whether $\gamma(M)=\chi(M)$ depends on whether $M$ can be partitioned properly, hence the name
``Dehn-Type quantity" in the title. As $\gamma(M,g)$ can depend on the metric and the frame, 
it is a quantity which like the Hilbert action could be studied as a variational problem.

\paragraph{}
In most examples we actually computed $\gamma(M)$ as a finite sum $\gamma_d(M)$ and not the
true coordinate independent average as finding $\gamma(M)$ requires an additional costly integration
at every point $x \in M$. We are interested to estimate $\delta(M)=\gamma(M)-\chi(M)$ or $\delta_d(M)=\gamma_d(M)-\chi(M)$. 
The curvature $K(x)$ can be seen locally as an index
expectation of Poincar\'e-Hopf indices of probability spaces of Morse functions in which a product measure
tries to emulate a product of 2-manifolds. As this interpretation only works locally in small regions,
we have to cut up $M$ into polyhedra $M_j$ and understand what happens
with boundary curvature when gluing the pieces $M_j$ together. Much of the content of this effort was trying
to control this boundary curvature. Unfortunately, doing that will need much more work. 
The Allendoerfer-Weil gluing works perfectly fine in the Gauss-Bonnet-Chern case, leading
to $\chi(M)$ but there is some gluing curvature in the $\gamma(M)$ case. It might be possible that
$\delta(M)=0$ is linked to symmetry on $(M,d)$ as this requires less cutting is needed.
It would not surprise to be the case in light of \cite{Kennard2013,AmannKennard}. We comment on this
at the very end. 

\paragraph{}
If $M$ can be partitioned into contractible Riemannian polyhedra $M_j$ which at every boundary point satisfy 
a self-dual Fenchel cone condition, we call such polyhedra {\bf orthotope}. It allows us to decompose $M$ into 
polytop pieces $M_j$ on which we chose curvature as index expectation using probability spaces $\Omega_j$ 
of Morse functions on $M_j$. Unlike in the classical Gauss-Bonnet-Chern theorem, where a single probability space works, the 
probability spaces $\Omega_j$ in different cells can now be utterly unrelated. In a polytope case $M=N \times [a,b]$
where $N$ can have a boundary and the metric on $N$ is independent of the second variable, then $\gamma(M)=\chi(N)$.

\paragraph{}
We initially hoped to estimate the difference $\delta(M)=\gamma(M)-\chi(M)$ by estimating the glue curvature for some 
partition or in the limit when the diameters of the $M_j$ go to zero. One can think that if the fluctuations of $g$ or of
curvature are small and the overall curvature are positive enough, then $\gamma(M)-\chi(M)$ is so small that it forces also
$\chi(M)>0$, which is the actual goal to show for positive curvature
manifolds. An isometric circle action can not establish $\gamma(M)=\chi(M)$ in general, as $SU(3)$ shows which has 
a $\mathbb{T}^2$ action.

\paragraph{}
The fact that the gluing produces non-removable curvature at the intersections leads to an
abstract combinatorial problem for a geometric realization $M=\bigcup M_k$ of a finite abstract simplicial
complex $G$ with maximal simplices generating the chambers $M_k$ in $M$. If $f$ is a Morse function on $M$
which defines Morse functions on each $M_k$ in the sense defined below and $x$ is a simplex in $G$, then we can
look at $h(x) = \sum_{m \in x} i_f(m) = 1$ with Morse indices $i_f(m) \in \{-1,1\}$. 
This leads to the counting matrix $L(x,y)$ counting the number of simplices
in $x \cap y$. This is a positive definite quadratic form in $SL(n,\mathbb{Z})$ of symplectic nature as $L$ is 
conjugated to its inverse $g$, the Green function which is again integer valued and for which $\sum_{x,y} g(x,y) = |G|$
is the number of simplices in $G$. For a global Morse function $f$, we can count the indices using 
exclusion-inclusion. This leads to $h(x)=\omega(x)=(-1)^{{\rm dim}(x)}$ and $\sum_x h(x) = \chi(G)=\chi(M)$ is the 
Euler characteristic of $G$ and equal to the Euler characteristic of $M$. Now, if $f_k$ are Morse on $M_k$ and
independent of each other, then the gluing does not work and $h(x)$ can be an arbitrary integer-valued function on 
the simplicial complex. There is still a connection matrix $L$ which satisfies ${\rm det}(L) = \prod_x h(x)$ 
and $\sum_{x,y} g(x,y) = \sum_x h(x)$. The situation of having no invariant but a metric-dependent quantity brings
the functional $\gamma(M)$ closer to the concept of {\bf energized simplicial complexes} \cite{EnergizedSimplicialComplexes},
where the topological $h(x)=\omega(x)$ leading to Euler characteristic $\chi(M)$ is replaced by a more general $h(x)$ for which 
$\gamma(M) = \sum_x h(x)$ differs from $\chi(M)$. Averaging over probability spaces of Morse functions then 
renders $h(x)$ real-valued for which the abstract combinatorial results still work. 

\paragraph{}
The constant $C_d=(d!)^{-1} (4\pi)^{-d}$ 
in the definition of $\gamma$ is chosen so that $\gamma(M)=\chi(M)$ for spheres or products of $2$-manifolds.
For the $2d$-dimensional sphere $\mathbb{S}^{2d}$, where the sectional curvatures
$K_{ij}=1$, we have $|\mathbb{S}^{2d}|= 2d!(4\pi)^d/(2d)!$ and $C_d \cdot |\mathbb{S}^{2d}| \cdot (2d)! = 2$. 
In the case $d=1$, have $\gamma(M)=\chi(M)$. For a unit sphere $M=\mathbb{S}^d$ the sectional curvature are $K_{ij}=1$
so that $K=2/|\mathbb{S}^{2d}|$ agrees with the {\bf Gauss-Bonnet-Chern curvature} $K_{GBC}$. 
Integrating over $M$ immediately gives $2$ as the volume $|\mathbb{S}^{2d}|$ of the $2d$-sphere is $2d! (4 \pi)^d/(2d)!$.
For the product manifold $M=\mathbb{S}^2 \times \mathbb{S}^2$, the volume is $6$ times bigger and the
curvature is $3$ times smaller because there are $8$ terms which are $1$ and $16$ terms which are $0$.
The Euler characteristic is therefore two times larger. Indeed, it is 
$\chi(\mathbb{S}^2 \times \mathbb{S}^2) = 2^2 = 4$.

\paragraph{} 
The following observations are all immediate. The first one is a property shared with 
Euler characteristic. It is important however that the metric on the product is the product
metric, meaning that distances in $M_1$ do not depend on coordinates in $M_2$ and vice versa:

\obs{
If $M=M_1 \times M_2$ with the product metric and compatible frame, then
$\gamma_d(M) = \gamma(M_1) \times \gamma(M_2)$. 
}

This follows from the fact that any 2-plane which is mixed has zero curvature.
However, the result $\gamma_d(M)$ can depend on the frame. This already happens for $M=S^2 \times S^2$. 

\begin{figure}[!htpb]
\scalebox{0.4}{\includegraphics{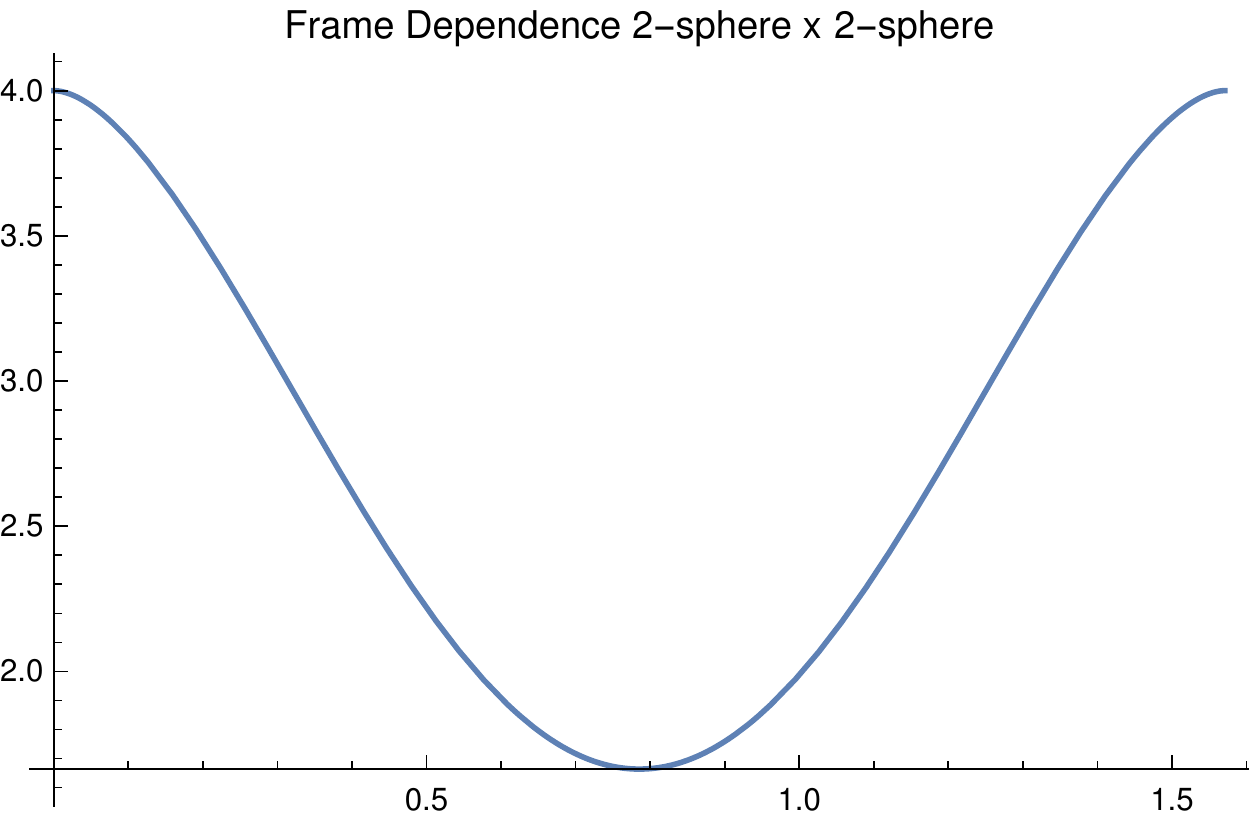}}
\scalebox{0.4}{\includegraphics{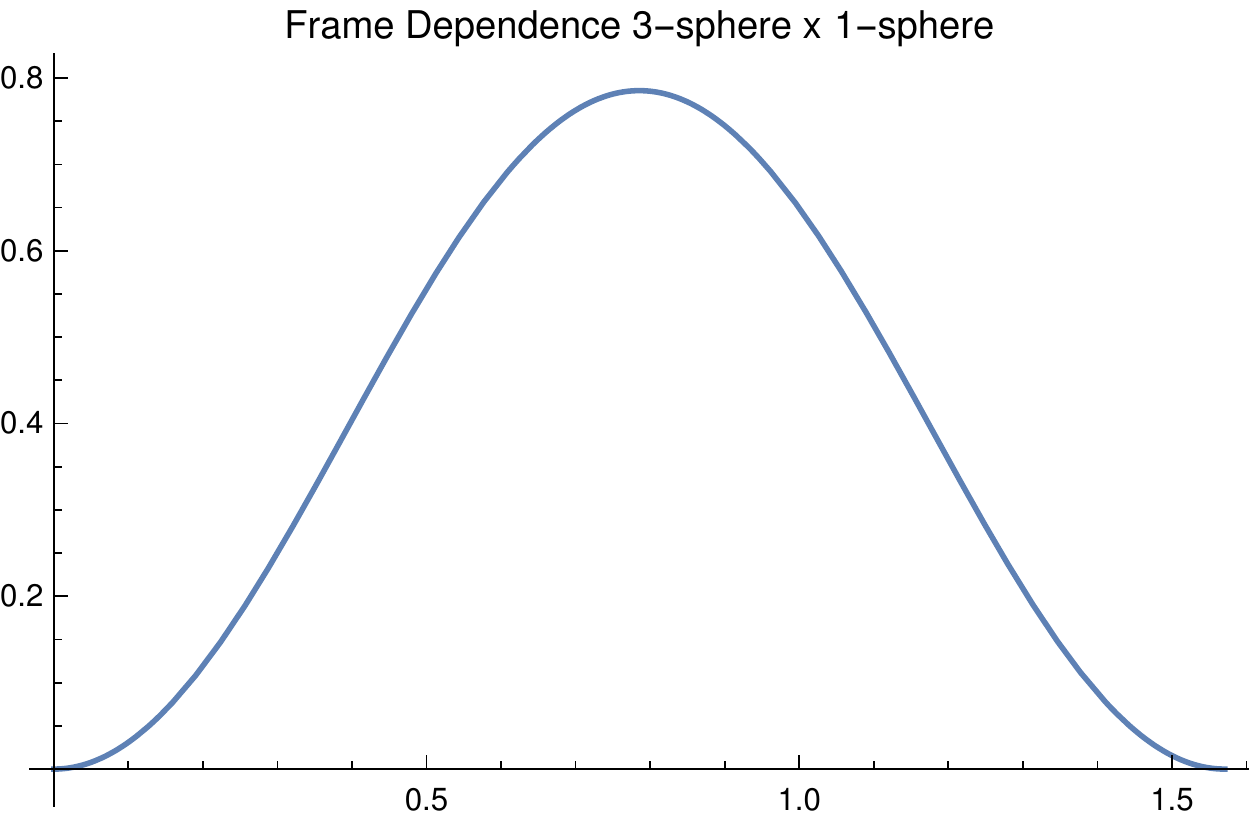}}
\label{sphere}
\caption{
The value $\gamma_d(M)$ can depend on the frame even in product situations.
We see two examples of 4-manifolds, the left shows $\mathbb{S}^2 \times \mathbb{S}^2$
(for which one does not know whether some positive metric exists on), the right shows
$\mathbb{S}^3 \times \mathbb{S}^1$. In both cases, we rotate around an axes
by an angle in $[0,\pi/2]$. 
}
\end{figure}

\paragraph{}
We have not defined $\gamma$ a priory for odd-dimensional manifolds but in dimension $2d+1$, the
same definition can be used, we just have in each product just $d$ perpendicular planes. 

\paragraph{}
Especially: 

\obs{
In general, for any products of $2$-manifolds $M=M_1 \times M_2 \times \cdots \times M_d$
with product metric and orthogonal frame compatible with the product we have $\chi(M) = \gamma_d(M)$. 
}

This follows from the fact that for any mixed plane visiting two different manifolds, the curvature is zero. 
The only contributions which remain are $K_1(x_1) \cdots K_d(x_d)$, where $K_k$ are the Gaussian curvatures
in the factors $M_j$. So, $\gamma(M) = \prod_k \chi(M_k)$. Again, also here, if the frame is turned in $M$ then 
$\gamma$ can become smaller. The frame for which $\gamma(M)$ is maximal is the frame which contains orthogonal
sub-frames spanning each factor $M_k$. 

\paragraph{}
Very similar is the case when one factor is odd-dimensional. We have to state that separately because we don't 
have necessarily $\gamma(N)=0$ for odd-dimensional $N$: 

\obs{
In general, for any products $M=N \times T$ with odd-dimensional $N,T$ with the product metric and
one basis vector in $T$, then $\chi(M) = \gamma(M)=0$.
}

This follows from the fact that one of the factors in the product of sectional curvatures has to come from a 
mixed plane and so has zero curvature. This means for example that for any {\bf space-time metric} $N \times \mathbb{T}^1$
(where the circle $\mathbb{T}^1$ plays the role of time) and for which the metric on $N$ is time independent, 
then $\gamma(M)=\chi(M)=0$. An example of $\mathbb{T}^3 \times \mathbb{T}$ with time dependent metric on$\mathbb{T}^3$ is
in the example section. Already for $\mathbb{S}^3 \times \mathbb{T}^1$ there are coordinate frames for which 
$\gamma_d(M)$ is non-zero. 

\paragraph{}
For $d=1$ and $M=\mathbb{S}^2$ with radius $1$ in particular, $K$  
$2 |\mathbb{S}^2|^{-1} = (4 \pi)^{-1}$ 
times the Gaussian curvature $1$ of the sphere. Already for $d \geq 2$, the curvature $K$ 
differs from the {\bf Gauss-Bonnet-Chern integrand}
\cite{HopfCurvaturaIntegra,Allendoerfer,Fenchel,AllendoerferWeil,Chern44,Chern1990,Cycon}
$$ K_{GBC} = 2^{-d} C_d \sum_{\sigma,\pi} {\rm sign}(\pi) {\rm sign}(\sigma) 
             R_{\pi_1,\pi_2,\sigma_2,\sigma_1} \cdots R_{\pi_{2d-1},\pi_{2d},\sigma_{2d-1},\sigma_{2d}} \; , $$
which involves a summation over all pairs of permutations $\pi,\sigma$ of $\{1, \cdots, 2d\}$ and
where the Riemannian curvature tensor expressions $R_{ijkl}(x)$ also refer to an orthonormal coordinate system at $x$.
The curvature $K$ sums over the single set of all permutations of $\{1, \cdots, 2d\}$:
$$ K(x) =   C_d \sum_{\sigma}   R_{\sigma_1,\sigma_2,\sigma_1,\sigma_2}(x) \cdots R_{\sigma_{2d-1},\sigma_{2d},\sigma_{2d-1},\sigma_{2d}}(x) \;  $$
and where also at each point $x$, the curvature tensor entry $R_{ijkl}(x)$ 
is computed using normal coordinates at $x$. Compare $K$ also
with the {\bf scalar curvature} which is $2K$ for $d=1$ and in normal coordinates given by 
$K_{Scal}(x) = \sum_{i,j} R_{ijij}$ and integrating $\int_M K_{Scal}(x)\; dV$ to the {\bf Hilbert action}.

\paragraph{}
The curvature $K_{KBC}$ also explores off-diagonal terms of the Riemannian curvature tensor $R$. 
For example, for the non-negative curvature $6$-manifold $M=SO(4)$ equipped with the
bi-invariant metric obtained from the Killing form, where the curvature tensor is evaluated on 
vector fields $X,Y,U,V$ as $R(X,Y,U,V)=g([X,Y],[U,V])$, there are $6!^2=720^2$ summands for $K_{GBC}$, while
only $720$ appear for $K$. One can see immediately why $K$ is constant zero. While $K_{GBC}$ is not zero everywhere,
it not so obviously sums up to zero, the Euler characteristic $\chi(M)$ of the compact Lie group $M=SO(4)$. 
In this example however, the fact that the universal cover $Spin(4)$ is the product $S^3 \times S^3$ of two spheres
is special. 

\paragraph{}
We guessed first that $\gamma(M)=\chi(M)$ for all Riemannian $2d$-manifolds $M$ and even were under the impression
that $\gamma$ is independent of the metric and that $\gamma_d(M)=\gamma(M)$. 
But then we computed $\gamma$ for the $8$-manifold 
$SU(3)$, where $\gamma(M) > \chi(M)=0$ and thought of the inequality $\gamma(M) \geq  \chi(M)$ which 
is not true: Cliff Taubes then sent us an example with a family of metrics on $M=\mathbb{T}^4$ in which 
$\gamma(M)$ depends on the metric and can become both positive or negative and also that
$\delta(M)=\gamma(M)-\chi(M)$ can take any value $c$: for every $c$ there is a metric on 
the 4-torus $M$ such that $\delta(M)=c$. Of course, $\delta(M)=\delta_d(M)=0$ in the independent
product case $(\mathbb{T}^2,g_1) \times (\mathbb{T}^2,g_2)$.

\paragraph{}
Jeffrey Chase pointed out to us a theorem of Gilkey \cite{Gilkey1975} who proved a conjecture of Singer 
stating that the only {\bf diffeomorphism invariants} among compact Riemannian manifolds which are obtained by
integrating local formulas in $R$ derivatives of $R$ and $g$ are multiples of the Euler 
characteristic and that the curvature must be a multiple of $K_{GBC}(x)$ plus a divergence
of a vector field. This theorem tells that it is futile to look for integrands similar
to $K_{GBC}(x)$ which are metric invariant. The topic has been spun further by weakening
the symmetry and look for Riemannian invariants which are only invariant under conformal scaling
modifications of the metric $g$. A conjecture of Deser-Schwimmmer from 1993 asserted that such a curvature
has to be a multiple of $K_{GBC}$ plus a divergence plus a conformal invariant $W$. This
was proven later by Alexakis \cite{Alexakis1,Alexakis2}. 

\paragraph{} 
The just mentioned work on invariant theory also shows that most
{\bf index expectation curvatures} are not given in terms of analytic expressions in $g,R,\nabla R$.
Curvatures $K(x) = {\rm E}[i_f(x)]$ are expectations of Poincar\'e-Hopf indices of Morse 
functions on $M$, integrate up to $\chi(M)$ and are by nature local too in the sense that the curvature computed in 
the ball $B_r(x)$ is independent of $r$ as long as $r>0$. These curvatures are in general 
also not coordinate independent. The invariance theory started with Herman Weyl.

\paragraph{}
The product curvature $K(x)=\prod_k K_k(x_k)$ in a product 
$M_1 \times \cdots \times M_d$ is an example of an index expectation curvature that is not
isotropic. We symmetrize it by summing over all permutations of coordinate axes and average
over all rotations to make it isotropic. Jason de Vito pointed out to me the coordinate dependence
by giving frame-dependent values in examples like $\mathbb{CP}^2$. It actually does in most
cases. The frame-dependence is more serious for physics. The metric dependence leads to the interpretation
of depending on some mass distributed on the manifold. $\gamma$ 
must be seen as a higher order version of the Hilbert action which
explores curvature correlations between d coordinate planes in a $2d$-manifold. As it 
depends on the metric, one can look at the question which metrics maximize or minimize it. 
Because it also depends on the frame, we could use it to find a frame which extremizes curvature. 

\paragraph{}
The quantity $\gamma$ or $\gamma_d$ is motivated by the {\bf Hopf conjectures} from 1931 \cite{Hopf1932} 
which have reappeared in the sixties \cite{BishopGoldberg} and \cite{Chern1966} and are listed 
as problems 8) and 10) in the problem collection \cite{YauSeminar1982}. 
The idea had been to look for a curvature which only involves sectional curvatures and which is positive if
sectional curvature is positive.  The functional appears to be natural as it is a {\bf $d$-point 
correlation function} of sectional curvature on $2d$-manifolds. 
In this picture, scalar curvature is a {\bf $1$-point correlation function}. We only know for now:

\obs{
For manifolds with $\delta(M)=0$ or $\delta_d(M)=0$ (for some choice of frame), the Hopf product conjecture holds.
It might be enough to compute $\gamma_d(M)$ and estimate $\delta_d(M)$ to establish $\chi(M) \neq 0$. 
}

\paragraph{}
For arbitrary smooth compact manifolds, one can look at $k$-point correlation functions which looks at all possible 
$k$-tuples of orthogonal $2$-planes, multiplies those curvatures, then integrates over the manifold. Also 
Aside for $k=1$, correlation functionals $\gamma_k(M)$ (or discrete versions $\gamma_{k,d}$ of it which in general
disagree for $k>1$) seem not have been investigated yet except for the
{\bf Hilbert action function} $\gamma_1(M)$.
The functionals $\gamma_k(M)$ or $\gamma_{k,d}(M)$ are all positive for positive curvature manifolds.
Computing $\gamma(M)$ (and even $\gamma_d(M)$) numerically is already challenging for $4$-manifolds, the
reason being that we have to produce orthonormal coordinate systems $t$ for $T_xM$ at each point $x$ of $M$.
It would be interesting to investigate this even in special cases like for K\"ahler manifolds as we
even do not know yet $\gamma(M)$ for the K3 surface, a smooth 4-manifold with $\chi(M)=24$. 
For complex manifolds, one could modify $\gamma$ also by using holomorphic sectional curvatures.

\section{Coordinate dependence}

\paragraph{}
Expressions close to curvature $K(x)$ have been explored as expectation of discrete 
Poincar\'e-Hopf indices \cite{DiscreteHopf,DiscreteHopf2}. This led us to $\gamma_d(M)$. It is based on integral geometry 
\cite{Banchoff67,Banchoff1970}, articles which pioneered such integral geometric approaches to Riemannian geometry.
It is a general principle that integrating the Poincar\'e-Hopf relation $\chi(M) = \sum_{x \in M} i_f(x)$
over a probability space $(\Omega,\mathcal{A},P)$ of Morse functions $f$ gives a curvature $K(x) = {\rm E}[i_f(x)]$
satisfying Gauss-Bonnet. If one uses a probability space so that in each $2$-plane $t_k$, an independent set of
Morse functions $f_k$ is used and $f$ is the sum of these functions $f_k$. The indices of $f$ are then the product
of the indices of $f_k$. By independence, this gives $K(x,t)=\E[i_{f_1,\dots,f_d}] =  \E[i_{t_1,f_1} \cdots i_{t_d,f_d}]$
$= \E[i_{{t_1},f_1}] \cdots \E[i_{{t_d},f_d}]$ $= K_{t_1}(x) \cdots K_{t_d}(x)$ which is a
product of sectional curvatures evaluated at perpendicular t-coordinate planes at $x$.

\paragraph{}
The just mentioned curvature $K(x,t)$ depends on the coordinates $t$ but averaging point-wise over the fiber $T=O(M)_x$
produces a coordinate-independent $K(x)$. Integration over $T$ is natural as $T$ is a
Stiefel manifold of all {\bf orthonormal frames} at a point $x \in M$ and homeomorphic to the orthogonal group $O(2d)$ and
so carries a unique Haar probability measure $dt$. The push-forward curvature $K(x,t)$
coming from the frame bundle projection $\pi: O(M) \to M$ gives then the curvature $K(x)$. 
Numerically, we need at every point to find an orthonormal frame, which needs in general
a Gramm-Schmidt process, even if $M$ is given by an explicit parameterization. 

\paragraph{}
At any given point $x \in M$, we can average the sectional curvature product expression either as 
an integral over all frames leading to $K(x)$ or then as a finite sum 
of  permutations $\pi$ of $\{1,2,\dots,2d\}$. The finite sum 
$K_d(x) = K(x,t_0)=\sum_\pi \prod_{k=1}^d K_{\pi(2k-1),\pi(2k)}(x)/(2d!)$ with fixed
orthonormal frame $t_0$ is easier to compute than 
$K(x) = \int_{O(2d)} K(x,t) \; dt$ with Haar measure $dt$ on the orthogonal group $O(2d)$. 

\paragraph{}
The discrete curvature and averaged curvature agree at a point $x$ if some symmetry 
is present. This applies in product situations but only if the coordinate system matches the decomposition

\obs{
If $M=N \times T$, where $T$ is one-dimensional, then $K_d$ is zero if one of the basis vectors of the frame 
is in $T$.
}

In such an {\bf aligned case}, $K$ is the product of the $d$ sectional curvatures 
Already for a 4-manifold like $M=\mathbb{S}^3 \times \mathbb{S}^1$, the value $\gamma_d(M)$ can depend on the
coordinate frame. We can for example rotate in the $x_1,x_4$-plane and see $\gamma_d(M)$ change. 

\paragraph{}
Averaging principles equating continuum averages with discrete sums are known 
for {\bf Ricci curvature}, a finite sum of sectional curvatures
is equivalent to an average of sectional curvatures or that {\bf scalar curvature}, the trace of Ricci curvature
is an integral average of sectional curvatures. Ricci and scalar curvature are
given finite sum of sectional curvatures \cite{Petersen}.
The fact that in the case of Ricci and scalar curvature the discrete and continuum sum 
agree has been pointed out \cite{MorganRiemannian} and are a consequence of the Theorema egregia.
We first thought that this goes over to $K(x)$ but what happens is that if 
we rotate in a coordinate 2-plane $\Sigma$, then the other 2-planes intersecting $\Sigma$ also 
rotate around and so can change curvature. This can happen already in very symmetric situations 
like the complex projective plane $\mathbb{CP}^2$ as pointed out by Jason de Vito.

\paragraph{}
Here is a possible situation which on has not to average over the entire group $O(2d)$: 
If an arrow of time $v(x)$ exists such that that the sectional curvature 
$K(v(x),w)$ is independent of a space vector $w$ perpendicular to $v(x)$, then the discrete curvature sum
in a frame containing $v(x)$ as a vector is the total average.
The condition assures that rotating around $v(x)$ does not change the value. Now, this rotation subgroup
as well as all conjugations of it fix the value. If all these rotations generate the entire group $O(2d)$
the result is frame independent.  In physics, one would consider $v(x)$ a choice of a time direction
and the condition that $K(v(x),w)$ is independent of $w$ as some sort of local space isotropy. 

\paragraph{}
We know that if $\gamma(M)=0$, then this metric does not have strictly positive curvature. An example for
which $\gamma(M)=0$ for the standard Killing metric and a standard basis compatible with the splitting
of ${\rm Spin}(4)$ (a manifold mentioned at the end of the paper \cite{Bourguignon75})
for which it is not known whether it admits a metric of positive curvature or not.
The simply connected ${\rm Spin}(4) = \mathbb{S}^3 \times \mathbb{S}^3=SU(2) \times SU(2)$ 
is the universal cover of the $6$-manifold $SO(4)$ and has sectional curvatures with respect
to the bi-invariant metric which are explicitly given as $K(V,W)=|[{\rm ad}_V,{\rm ad}_W]|^2 \geq 0$. It is
special as it is only semi-simple. 
In \cite{YauSeminar1982} the question was raised to get a simply connected manifold of non-negative curvature
which does not admit a metric of positive curvature. This question generalizes the
product question of Hopf whether $\mathbb{S}^2 \times \mathbb{S}^2$ 
(a non-negative curvature manifold) admits a curvature which is positive. Both the Hopf question
about $\mathbb{S}^2 \times \mathbb{S}^2$ and the Yau question asking for examples of non-negative curvature
manifolds admitting no positive curvature are open. 

\section{When do we have equality?}

\paragraph{}
In an earlier version of this paper, we thought that a partition of the manifold 
into orthotope pieces $M_j$ would allow us to see that the curvature on the boundary 
can be discarded when gluing things together. This turned out to be wrong. When 
looking at a Riemannian polyhedron $M_k$ containing a point $x$ and making the curvature 
$K(x)$ isotropic at this point does not make it isotropic everywhere. While the 
discrepancy from isotropy goes to zero if the manifold is chopped up finer and finer,
there are also errors which manifest at the
boundary. These errors add up but with a fine partition, also the number of boundary
vertices grows too. This can lead to $\gamma(M) \neq \chi(M)$ in general. In each
polyhedron $M_j$ we have a match $\gamma(M_k) = \int_{M_k} E[i_f(x)] \; dV(x) = \chi(M_k)$. 
But $K(x)$ is only at the point equal to $E[i_f(x)]$.  	

\paragraph{}
Despite the fact that orthotop manifolds might eventually not matter in this
context, we leave here the original analysis about this concept. It actually leads to an interesting 
{\bf global geometric problem} in Riemannian geometry which seems not have been 
studied so far: 

\quest{
Can we chop up any compact Riemannian manifold into orthotop pieces? 
}

\paragraph{}
We say that a contractible Riemannian polyhedron $M$ is in the class
$\mathcal{O}$ of {\bf orthotope manifolds} if at every boundary point $x$ of $M$,
the {\bf Fenchel cone} $C$ generated by $B_r(x) \cap M$ in the limit $r \to 0$ is either
an Euclidean half space or equal to
the {\bf dual Fenchel cone} $\hat{C}$ at that point. We say that a Riemannian manifold
$M$ is in the class $\mathcal{O}$ if it is possible to see $M$ as a union of arbitrary 
small Riemannian polyhedra $M_j$ in the class or orthotope polyhedra 
$\mathcal{O}$ such that intersections $M_i \cap M_j$ are empty or in $\mathcal{O}$. 

\paragraph{}
We originally also claimed to answer the following question affirmatively.

\quest{
The property $\mathcal{O}$ is independent of the metric $M$.
} 

If $M$ is a Riemannian manifold which admits an orthotope partition, we can 
look at the $1$-skeleton complex $S$ of this. Obviously the property to be in $\mathcal{O}$
only refers to $S$. We can deform the metric arbitrarily
in the interior of each maximal simplex as this does not change $S$ and so the
angles between various elements in $S$. Now, the question is whether we can
translate the grid $S$ on $M$ without destroying the orthogonality condition. 
If that is the case, then $\mathcal{O}$ is independent of the metric. 
The reason is that the space $\mathcal{G}$ of Riemannian metrics
on a compact differentiable manifold $M$ is known to be a convex cone in the space
of symmetric covariant 2-tensors \cite{Ebin1968}. Therefore, invariance under
local deformations implies invariance in general. So, the answer of the above question 
really depends on whether we can shift along $S$ on $M$ by adapting the angles to 
be right angles everywhere. At this moment it is still also possible that
all manifolds are in $\mathcal{O}$ in which case the above question would also be answered
with a yes. 

\paragraph{}
To be in $\mathcal{O}$ is invariant under the operation of taking disjoint union,
products. If the property $\mathcal{O}$ is a metric invariant, it is likely also
invariant under the suspension operation. It should then also be invariant
under the operation of taking connected sums and possibly by taking fibre bundles
where both the base and the fibre are orthotope. 
A good example to test this out is $SU(3)$ which is a $S^3$ bundle over $S^5$. 
Because this bundle is not trivial, it is not clear to us however how one can chop
up the bundle in a compatible way. Since $SU(3)$ is so well known and has a bi-invariant
metric, something explicit could be written down. In a Lie group $M$, the question can 
also be seen as a crystallographic problem of building a rectangular crystal in $M$ 
partitioning $M$ into orthogonal structures. As the sphere shows, we do not need all
parts to be rectangular, there are partitions of spheres into orthogonal triangles. 

\paragraph{}
The question is also interesting in the hyperbolic case, where {\bf Escher type figures}
need to be drawn in higher dimensions.
It would be nice to have a result proving that if the universal cover is in $\mathcal{O}$ 
then $M$ is in $\mathcal{O}$. This would then apply for manifolds with negative
or non-negative curvature which by a theorem of Hadamard-Cartan have $\mathbb{R}^{2d}$ as the universal
cover. Also if $M$ is a compact manifold with a universal cover $\mathbb{R}^{2d}$
we can lift the curvature to its cover and  ask whether a ``time average"  exists 
$$  \int_M K \; dV = \lim_{R \to \infty} \int_{B_R(x)} K \; dV/V(B_R) $$
where $B_R(x)$ is a ball of radius $R$ centered around a point $x$ and $V(B_R)$ is the volume. 

\paragraph{}
Coming back to the Hopf conjectures, the negative curvature case is 
problem 10) in the collection \cite{YauSeminar1982}. As mentioned there, looking at the universal cover had been 
suggested by Singer. Also the Hopf conjecture version with $(-1)^d \chi(M) \geq 0$ for non-positively 
curved manifolds (one of the first places, where this is stated explicitly in print is 
\cite{BishopGoldberg,Chern1966}) could be studied by looking at the universal cover. 
The next paragraph gives a bit of a historical background about the Hopf conjectures. 

\paragraph{}
The {\bf Hopf sign conjecture} \cite{Hopf1932,BishopGoldberg,Chern1966,YauSeminar1982} states that a
sectional {\bf curvature sign} $e$ leads to $e^d \chi(M)>0$ for any compact Riemannian $(2d)$-manifold $M$. 
According to \cite{BergerPanorama,Berger2002}, Heinz Hopf already in the 1920ies envisioned that some kind of
Gauss-Bonnet theorem could prove this. In \cite{Weinstein71}, the term algebraic Hopf conjecture was used.
The multi-dimensional Gauss-Bonnet-Chern theorem 
worked in the case $d=2$ \cite{Chern1966}, but \cite{Geroch} showed that the Gauss-Bonnet-Chern 
integrand can become negative at some points even in the positive curvature case. We can revive the
algebraic Hopf conjecture however because there are plenty of other curvatures: take any probability 
space $(\Omega, \mathcal{A},P)$ of Morse functions and take as curvature the expected 
Poincar\'e-Hopf index density $K(x)=\E[i(x)]$.
Taking expectation of the Poincar\'e-Hopf theorem $\sum_x i_f(x)$ leads so to a generalized Gauss-Bonnet result.
Already the classical Gauss-Bonnet-Chern integrand is an index expectation curvature $\E[i_f(x)]$.

\paragraph{}
If a $2d$-manifold $M=M_1 \times \cdots \times M_d$ is a product of $2$-manifolds, 
the curvature is $K(x) = \prod_{k=1}^d K_k(x)$, the product of the Gauss curvatures $K_k$
of $M_k$. This motivates to use locally a product probability space of Morse functions so that the 
curvature becomes a product of sectional curvatures. The product case \cite{Hopf1953} 
is encouraging since this curvature $K$ is positive, even so the sectional curvatures are 
only non-negative. Despite the fact that in the product case, there are always $2$-planes with 
zero sectional curvature, the product curvature $K$ is positive. 
This is encouraging, but most manifolds are not product manifolds and the above probabilistic
trick to get a curvature like in product space at a point only works locally. 

\paragraph{}
The positivity argument involving a product probability space can be applied locally to a 
Riemannian polyhedron. We were already in \cite{DiscreteHopf,DiscreteHopf2} concerned
that the process could just move any negative curvature contribution to the boundary. And this
problem remains to be solved. We need to understand the boundary curvatures and especially 
what happens if we glue pieces of Riemannian polyhedra together. 
It can be called a {\bf Allendoerfer-Weil gluing problem} because in some sense, 
this is close to what Allendoerfer and Weil had to do when proving the Gauss-Bonnet-Chern theorem. 
One should definitely also appreciate the early spear-heading
arguments of Hopf (who could generalize Gauss-Bonnet to hypersurfaces) and Fenchel who worked
(under difficult circumstances in Europe) independently from Allendoerfer using local embeddings. 
With a Nash embedding theorem, already then, the Gauss-Bonnet-Chern theorem would have followed. 
Fenchel still matters today:
the topic of {\bf convex analysis} which was a specialty 
of Werner Fenchel \cite{BonnesenFenchel} could play a role. 

\paragraph{}
It was the Allendoerfer-Weil collaboration which first proved the generalized Gauss-Bonnet result. 
Their work was technical and got little appreciation. \cite{Wu2007} for example tells 
``such a combinatorial argument is entirely unsatisfactory from a geometric standpoint". 
Of course, the intrinsic global proof of Chern \cite{Chern44} is much more elegant and ultimate.
This is with some humor expressed by Chern himself in \cite{Chern1979}, ``the danger in cutting a manifold 
is that it might be killed". Still, the ``divide and conquer" approach clarifies why the 
product curvature works. While Chern's proof \cite{Chern44} as well as Patodi proof \cite{Cycon} 
(the proof I was exposed to as a student) are elegant and global, the combinatorial proof given here only needs 
modest tools like multi-variable calculus and does not even use differential forms. It has potential 
for more, as one can adapt the probability spaces depending on the local structure of the manifold to 
get other results. It turns out that the probability spaces in different simplices of $M$ can be modified 
arbitrarily and Gauss-Bonnet still works globally, but only if the gluing works. It is here
where the orthotope condition comes in. The {\bf Allendoerfer-Weil} gluing procedure works because
$K(x)$ has the same vertex curvature contribution than $K_{GBC}$. 

\paragraph{}
It is interesting what happens during gluing. It turns out that it is possible to glue different 
manifolds equipped with different probability spaces together without leaving traces of the 
``glue curvature" as long as some self-duality conditions are true at the boundary points and if the
probability measures are isotropic, meaning that they are invariant under orthogonal transformations in $E$. 

\paragraph{}
A critical point at the boundary of a manifold with
boundary $\delta M$ is a solution to a {\bf Lagrange problem}. The normal vector is perpendicular to the boundary 
and only if pointing inside the {\bf dual Fenchel cone} $\hat{C}$ of the solid cone $C=B_r(x) \cap M$ 
contributes an index at the boundary. It is important that the Fenchel cone is either a half space or 
that the dual Fenchel cone agrees with the Fenchel cone, because this
assures no overlap of curvature. In general, it can happen that a Morse function on one cell also contributes
curvature to a neighboring cell. This depends on the choice of probability spaces in the two cells. 

\paragraph{}
If we glue two cells, the relevant critical points on both cells $M_j$ then both contribute to critical points 
inside the intersection $M_0$. We need the {\bf isotropy condition} of the probability spaces and
that the dual Fenchel cone agrees with the Fenchel cone in order that the boundary 
curvatures from different chambers produce the Euler characteristic of the intersection allowing to leave 
away the simplices in the interior of the intersection as well as the interior boundary curvatures. 
This requires that we can chop up the manifold into orthotope blocks (which not necessarily need to 
be hyper rectangles; the $2$-sphere for example can be chopped up into 8 rectangular triangles or into two balls.) 
In general, when disregarding the orthogonality condition,
there is not always a perfect cancellation due to overlap. 
If the isotropy condition is not uniform, we also have not exact
cancellation as the curvatures at a point might not add up to $1$. 

\section{Examples}

\paragraph{}
Let us start with the example of a 4-torus $M$ with coordinates $(x,y,s,t) \in \mathbb{R}^4/(2\pi \mathbb{Z})^4$
and metric $g = dx^2+dy^2+\exp(2 u) ds^2 + \exp(-2u) dt^2$ which has the volume $4$-form $dV=dxdydsdt$. The 
function $u(t,s)$ is an arbitrary smooth function of two variables that is $(2\pi)$-periodic both in $s$ and in $t$. 
\footnote{This computation was shown to me by Cliff Taubes on 5/27/2020. 
It disproves my original claim that $\gamma_d$ is independence of the metric. }
If $K_{ij}(x,y,s,t)$ be the sectional curvature in the $ij$-plane spanned by $\{e_i,e_j\}$ with $e_j$ denoting
the standard basis, then we have a symmetric {\bf sectional curvature matrix} given by
$$ \left[ \begin{array}{cccc} 
             0             &     0          & -u_t^2-u_{tt} & -u_t^2+u_{tt} \\
             0             &     0          & -u_s^2-u_{ss} & -u_s^2+u_{ss} \\
             -u_t^2-u_{tt} &  -u_s^2-u_{ss} &   0           &  u_t^2+u_s^2  \\
             -u_t^2+u_{tt} & -u_s^2+u_{ss}  & u_t^2+u_s^2   &     0      \end{array} \right]  \; . $$
Now, summing over all $24$ permutations $\sigma$ and multiplying with $C_2 = 1/(32 \pi^2)$ gives
$$  K = \frac{1}{32 \pi^2} \sum_{\sigma} K_{\sigma(1) \sigma(2)} K_{\sigma(3) \sigma(4)} 
      = \frac{1}{2\pi^2} ( u_s^2 u_t^2 - u_{ss} u_{tt}) \; . $$
Thus, after integrating over $x,y$ givings a factor $4\pi^2$ gives
$$   \gamma_d(M) = \int_0^{2\pi} \int_0^{2\pi} 2 ( u_s^2 u_t^2 - u_{ss} u_{tt}) \; ds dt \; . $$
In the case $u(s,t) = \cos(s)+\cos(t)$, one obtains $\gamma_d(M) = \pi^2$ for $u(s,t) = \cos(t+s)$ one gets $-\pi^2/2$. 
When multiplying $u$ with a constant $\lambda$, then $\gamma_d(M)$ scales by $\lambda^2$. This example shows
that $\gamma_d(M)$ can take any real value even if $\chi(M)=0$. 
By the way, the Gauss-Bonnet-Chern integrand in this case is also explicit. We have
$K_{GBC} = (u_{t,s}^2 - u_{tt} u_{ss})/(2\pi^2)$. The Monge-Amp\`ere partial differential
equation which solves the inverse problem and gives $u$ as a function of a given curvature 
function is now even simpler than in the case of a hypersurface $z=u(r,s)$ in $\mathbb{R}^3$, 
where the Gauss curvature is $\det(d^2 u)/(1+|du|^2)^2$. 
Also as a remark, the example can be extended to $g = \exp(2 v) dx^2+\exp(-2v) dy^2+\exp(2 u) ds^2 + \exp(-2u) dt^2$
where additionally, a function $v(s,t)$ is given. The curvature expressions are still explicit but complicated
and $\gamma_d(M)$ needs to be computed numerically. 

\paragraph{}
If $M$ is the round $2d$-sphere $S^{2d}$ or radius $1$, its volume is $V=|S^{2d}|=(4 \pi)^d 2d!/(2d)!$
and its curvature is $K=(2d)!/((4\pi)^d d!)$. Multiplying $K$ with $V$
gives $2$. For $d=2$ for example, where we have a $4$-sphere
of constant curvature $K=(3/8)/\pi^2$, multiplying the curvature with the volume $|S^4|=8 \pi^2/3$ 
gives $\chi(M)=2$. One could use $\chi(M)=\gamma(M)$ 
derive the formula for $|S^{2d}|$ because all 
$(2d)!$ curvature combinations $\prod_{k=1}^d K_{\pi(2k-1),\pi(k)}$
are $1$. Now compare to $(\mathbb{S}^2)^d$, where there are $2^d d!$ 
non-zero combinations. 
We also know that $\chi(S^{2d})/\chi((\mathbb{S}^2)^d) = 2^{d-1}$ and $|(\mathbb{S}^2)^d|=(4\pi)^d$. 
Therefore, 
$$  |\mathbb{S}^{2d}|=(4\pi)^d \frac{2^d d!}{(2d!)} 
      \frac{1}{2^{d-1}} = 2d! \frac{(4\pi)^d}{(2d)!} \; . $$
The reason why the volume of $(\mathbb{S}^2)^d$ explodes exponentially in $d$, 
and the volume of $\mathbb{S}^{2d}$ decays exponentially in $d$ is that the curvature 
comparison $K_{(\mathbb{S}^2)^d}/K_{\mathbb{S}^{2d}} = 2^d d!/(2d)!$
goes much faster to zero than $\chi((\mathbb{S}^2)^d)/\chi(\mathbb{S}^{2d}) = 2^{d-1}$ 
goes to infinity. 

\paragraph{}
For $d=2$ for example, we have the parameterization 
$r(t,s,u,v)$ = $[a \cos(t)$, $\cos(s)\sin(t)$, $\sin(s)\sin(t)\cos(u)$, $\sin(s)\sin(t)\sin(u)\cos(v)$, 
$\sin(s)\sin(t)\sin(u)\sin(v)] \in \mathbb{R}^5$ which for $a=1$ parametrizes the 
round $4$-sphere $\{|x|^2=1 \} \subset \mathbb{R}^5$ and in general produces a {\bf $4$-ellipsoid 
of revolution} for which all the curvatures
and computations still can be done and where $K=C_4(8a^4)[1+a^2+(a^2-1)\cos(2t)]^{-3}$ and
$dV=\sin(s)^2 \sin(t)^3 \sin(u) [1+a^2+(a^2-1) \cos(2t)]^{1/2} dsdtdudv$. Already changing an other 
parameter requires to compute orthonormal coordinate frames. The curvature expressions for a general 
$4$-ellipsoid are complicated already and a computer algebra system balks at computing the integral. 
The curvature tensor, when written out on file contains gigabytes. 
We computed $K(x)$ numerically in a rather general $4$-ellipsoid by evaluating the 
curvature numerically at $25^4$ points. This computation takes a long time because
large trig expressions need to be evaluated. Each evaluation takes a machine a few seconds and a
full would take weeks. Using symmetry one can push it down to days. When computing things for
$\gamma$ rather than $\gamma_d$, the computations would be even more difficult.

\paragraph{}
Already for the $4$-sphere $M$, implemented as a round $4$-sphere in $\mathbb{R}^5$, 
the Gauss-Bonnet-Chern curvature $K_{GBC}$ is complicated before simplification.
For an ellipsoid we can not simplify even. For the round $4$-sphere, we can
and have $dV= \sin^2(s) \sin^3(t) \sin(u)$. After simplification, this is
$K_{GBC} dV = 96 dV $, such that 
$\int_0^{\pi} \int_0^\pi \int_0^\pi \int_0^{2\pi} K_{GB} dV/(128 \pi^2) = 2$. 
The curvature $K_d$ involving sectional curvatures is much simpler but it depends
on how we parametrize the sphere. 

\paragraph{}
For the $6$-sphere $\mathbb{S}^6$, already the usual parametrization by $6$ Euler angles $\phi_1,\dots,\phi_6$ with 
$dV=\prod_{k=1}^5 \sin(\phi_k)^k$ leads to a GBC sum which has $720^2$ terms and is hard to compute
for computer algebra system. We know by symmetry however that $K_{GBC}=2$ in the case 
of a $2d$-sphere. 

\paragraph{}
For the ellipsoid $E: x^2/a^2+y^2/b^2+z^2/c^2=1$, the Gauss curvature is 
$K(x,y,z)=a^6 b^6 c^6$ $[a^4 b^4 z^2+a^4 c^4 y^2+b^4 c^4 x^2]^{-2}$.
For the parametrization $r(\theta,\phi) =[a \sin\phi \cos\theta,b \sin\phi \sin\theta,c \cos\phi]$,
the volume form $dV$ satisfies
$dV^2 = {\rm det} (dr^T\cdot dr) = c^2 \sin^4\phi(a^2 \sin^2\theta+b^2 \cos^2\theta)+a^2 b^2 \sin^2\phi \cos^2\phi$.
so that
$K dV = a^2 b^2 c^2$ $\sqrt{c^2 \sin^4\phi [a^2 \sin^2\theta+b^2 \cos^2\theta]+a^2 b^2 \sin^2\phi \cos^2\phi}$
             $[c^2 \sin^2\phi (a^2 \sin^2\theta+b^2 \cos^2\theta)$+$a^2 b^2 \cos^2\phi]^{-2}$. 
This doable integral evaluates to $\int_0^{2\pi} \int_0^{\pi} K(\theta,\phi)$ $dV(\theta,\phi)$ \; $d\phi d\theta = 2$.
The $4$-manifold $M=E \times E$ produces a Riemannian metric on $S^2 \times S^2$. At a particular
point $r(\theta_1,\phi_1,\theta_2,\phi_2)$ this gives an orthonormal frame $t=(t_1,t_2,t_3,t_4)$, 
where $t_1=r_{\theta_1}/|r(\theta_1)|$ and $r_3=r_{\theta_2}/|r(\theta_2)|$ and 
$t_{2k}=(r_{\phi_k} - (r_{\phi_k} \cdot t_{2k-1}) t_{2k-1})/|r_{\phi_k} - (r_{\phi_k} \cdot t_{2k-1}) t_{2k-1}|$.
The sectional curvatures $K_{ij}$ are all zero except for $K_{12}=K_{21}=K(\theta_1,\phi_1)$,
$K_{34}=K_{43}=K(\theta_2,\phi_2)$. The volume measure of the $4$-manifold $M$ is $dV(\theta_1,\phi_1) dV(\theta_2,\phi_2)$. 
The integral $\int_K \; dV$ can be split and gives $(\int_{E} K \; dV)^2 = 4$. 

\paragraph{}
The real projective plane $\mathbb{P}^2$ 
can be parametrized 
(see \cite{BergerGostiaux} page 89) in $E=\mathbb{R}^6$ as
$r(t,s)$ = $[\sin^2(s) \cos^2(t)$,  $\sin^2(s) \sin^2(t)$, $\cos^2(s)$, 
     $\sqrt{2} \sin(s)\cos(s)\sin(t)$,  $\sqrt{2} \sin(s)\cos(s)\cos(t)$, \linebreak
     $\sqrt{2}\sin^2(s)\sin(t)\cos(t)]$. 
The metric is $\left[ \begin{array}{cc} 2\sin^2(s) & 0 \\ 0 & 2 \end{array} \right]$
and the curvature is constant $1/2$. With that parametrization, the volume 
$4\pi$ is the same than the volume of the sphere $\mathbb{S}^2$. The Euler characteristic
is half of the Euler characteristic of $S^2$ and equal to $1$ as it should be by the
Riemann-Hurwitz covering formula. Of course, as $M$ is a 2-manifold, 
we have $\gamma_d(M)=\gamma(M)=\chi(M)$. 

\paragraph{}
The volume of the product manifold $M=(S^2)^d$ is $(4 \pi)^d$. Its 
Euler characteristic is $\chi(M)=2^d$. 
The volume ratio between $(S^2)^d$ and $S^{2d}$ is $2^d d!/(2d)! = (2d-1)!!$, the double factorial. 
From the $(2d)!$ sectional curvatures, there are $2^d d!$ which are non-zero. So, using the product
frame, $K_d=(2d)! 2^d d!/(2d)! (1/(d! (4\pi)^d)$. Multiply this with the volume $V(M)=(4 \pi)^d$ gives 
the Euler characteristic $\chi(M)=2^d$. 

\paragraph{}
$M=M_1 \times M_2$ of arbitrary odd-dimensional manifolds $M_i$. While also $K_{GBC}$ is zero in this case,
it is easier to see it for the curvature $K_d$ using a product compatible frame,
as the later involves sectional curvatures only. However, it is important here that the 
coordinate frame is chosen compatible with the decomposition. 
Examples are the $6$-manifold $M=S^3 \times S^3$ where each component $K_1 K_2 K_3$ has one part which splits
and so gives curvature zero.  Related examples are $P^3 \times S^3$ or $P^3 \times P^3$ or $S^1 \times N$, where
$N$ is an odd dimensional manifold. 

\paragraph{}
Given a $2d$-manifold, $M=M_1 \times M_2$, where both factors $M_i$ are even dimensional.
If a point $(x,y)$ is given, this gives a natural product coordinate system. 
While the Riemann curvature tensors (and so sectional curvature) of the product however add, we have
$K(x,y) = K_1(x) K_2(y)$, where $K_i$ are the curvatures in $M_i$.
The curvature is $K=K_1 K_2$, where $K_1$ is the curvature in $M_1$ and $K_2$ is the curvature in $M_2$. 
The Euler characteristic $\chi(M)$ is the product $\chi(M) = \chi(M_1) \chi(M_2)$. When seeing this 
probabilistically, the curvature is the expectation of indices of Morse functions and this relation is $E[i_{f_1 + f_2}] 
= E[i_{f_1}] E[i_{f_2}]$ which is a manifestation of $E[X Y] = E[X] E[Y]$ for independent random variables. 

\paragraph{}
The {\bf complex projective plane} $M=\mathbb{CP}^2$ is a 4-manifold homeomorphic to the 
quotient $S^{5}/S^1$. It is naturally equipped with the Fubini-Study metric $g$. 
The sectional curvature for holomorphic planes is $4$, the
other perpendicular planes have sectional curvature $1$. 
This follows from the formula $K_{X,Y} = 1+3 g(JX,Y)$ given in \cite{DoCarmo1992},.
Jason de Vito sent us a computation for the discrete
version of $M$ showing that that functional can be coordinate dependent. (We originally thought
it would be independent of the coordinates which is not true). 
The volume of $\mathbb{CP}^d$ is $V(\mathbb{CP}^d) =\pi^d/d!$ \cite{BergerPanorama}.
The sum over all curvatures is for $4$-manifolds is
$$ \sum_\pi K_{\pi(1) \pi(2)} K_{\pi(3) \pi(4)} = 8( K_{12} K_{34} + K_{13} K_{24} + K_{14} K_{23} )  \; . $$ 
For the ortho-normal basis $(1,0)$, $(i,0)$, $(0,1)$, $(0,i)$, the do Carmo formula gives
$K_{12} = K_{34} = 4$ as these are the holomorphic planes, 
while all other sectional curvatures are $K_{ij} = 1$. This gives the sum $8 (16+2)=144$. 
On the other hand, for the basis $(1,1)/\sqrt{2}$, $(i,0)$, $(1,-1)/\sqrt{2}$, $(0,i)$, one has
$K_{13} = K_{23} = 1$, with all other $K_{ij} = (1+3/2) = 5/2$. This gives  $8(1+25/4+25/4) = 108$. 
Projective spaces are important in physics because the $2d$-manifold $\mathbb{CP}^d$ is the space 
of pure states of a spin $d/2$-particle, the case $d=1$ being known as the {\bf Bloch sphere}.

\paragraph{}
Here is an illustration of Gauss-Bonnet for manifolds with boundary. 
If $M$ is $2$-sphere with metric $g$, cut it by a plane $\Sigma=\{ z=\cos(\phi_0) \} $. 
This produces two pieces $M^{\pm}$ which are topological discs with boundary and non-trivial boundary curvatures.
The inner total curvature of the top part is $1-z$ and of the lower part $1+z$. The boundary curvature
of the top is $z$ and for the bottom $-z$. Gauss-Bonnet gives in each part the Euler characteristic $1$.
The boundary manifold $M^0$ is a closed circle. It has an
intrinsic curvatures $k^{\pm} = \pm z$ from each side. The total curvature of the intersection zero. 
For getting the curvature $k^{\pm}$, we look at functions for which the gradient points in the half space $E^{\pm}$. 
Critical points from one side come by Lagrange by critical points under a constraint $\Sigma$ meaning
that the gradient is perpendicular to $\Sigma$. When looking at critical points 
on the intersection $M \cap \Sigma$, we get the union of both critical points. The index sum
in that case is $\chi(M^0)$. 

\paragraph{}
If $M=G$ is a Lie group equipped with a bi-invariant metric, and $X,Y,U,V$ are left-invariant vector fields
then $4R(X,Y,U,V)=g([X,Y],[U,V])$ so that $G$ has non-negative sectional curvature $K(X,Y)=|[X,Y]|^2$ 
(i.e. \cite{Gallot} section 3.17 or \cite{Mil63} Theorem 21.3). 
The Euler characteristic of a Lie-Group is always zero due to the existence of a non-zero vector field. 
$SO(4)$ for example has dimension $6$ and is doubly covered by ${\rm Spin}(4)=\mathbb{S}^3 \times \mathbb{S}^3$ 
having so only a semi-simple, not a simple Lie algebra. 
The metric $g(X,Y)=-{\rm tr}({\rm ad}_X {\rm ad}_Y)/2$, where ${\rm tr}({\rm ad}_X {\rm ad}_Y)$ 
is the Killing form is the standard metric in $\mathbb{R}^6$. 
It is non-degenerate as it is for any semi-simple Lie group by Cartan's Criterion and positive definite as $G$ is compact. 
In a basis which is compatible with the factorization $so(4) = so(3) \times so(3)$, the metric is diagonal. 
An orthonormal basis for $so(4)$ is given by the $6$ matrices:
\begin{tiny}
$$
\{ \left[\begin{array}{cccc}0&0&0&-\frac{1}{2}\\0&0&-\frac{1}{2}&0\\0&\frac{1}{2}&0&0\\\frac{1}{2}&0&0&0\\\end{array}\right],
\left[\begin{array}{cccc}0&0&\frac{1}{2}&0\\0&0&0&-\frac{1}{2}\\-\frac{1}{2}&0&0&0\\0&\frac{1}{2}&0&0\\\end{array}\right],
\left[\begin{array}{cccc}0&-\frac{1}{2}&0&0\\\frac{1}{2}&0&0&0\\0&0&0&-\frac{1}{2}\\0&0&\frac{1}{2}&0\\\end{array}\right] $$
$$ \left[\begin{array}{cccc}0&0&0&\frac{1}{2}\\0&0&-\frac{1}{2}&0\\0&\frac{1}{2}&0&0\\-\frac{1}{2}&0&0&0\\\end{array}\right],
\left[\begin{array}{cccc}0&0&\frac{1}{2}&0\\0&0&0&\frac{1}{2}\\-\frac{1}{2}&0&0&0\\0&-\frac{1}{2}&0&0\\\end{array}\right],
\left[\begin{array}{cccc}0&-\frac{1}{2}&0&0\\\frac{1}{2}&0&0&0\\0&0&0&\frac{1}{2}\\0&0&-\frac{1}{2}&0\\\end{array}\right] \} \; . $$
\end{tiny}

\paragraph{}
The manifold $M=SU(3)$ is a simple Lie group and produces an $8$-dimensional Riemannian manifold.
A basis in the Lie algebra is given by the 
Gell-Mann matrices. In order to get the metric and so to compute the curvature, we 
need to compute the matrices of an adjoint representation first. The formula for the sectional curvatures
for a left-invariant metric are given in \cite{Milnor1976} 
in terms of the structure constants $\alpha_{ijk}$ of $M$. In the concrete basis of Gell-Mann matrices 
\begin{tiny}
$$ \{ 
\left[ \begin{array}{ccc} 0 & 1 & 0 \\ 1 & 0 & 0 \\ 0 & 0 & 0 \\ \end{array} \right],
\left[ \begin{array}{ccc} 0 & -i & 0 \\ i & 0 & 0 \\ 0 & 0 & 0 \\ \end{array} \right],
\left[ \begin{array}{ccc} 1 & 0 & 0 \\ 0 & -1 & 0 \\ 0 & 0 & 0 \\ \end{array} \right],
\left[ \begin{array}{ccc} 0 & 0 & 1 \\ 0 & 0 & 0 \\ 1 & 0 & 0 \\ \end{array} \right],  $$
$$ 
\left[ \begin{array}{ccc} 0 & 0 & -i \\ 0 & 0 & 0 \\ i & 0 & 0 \\ \end{array} \right],
\left[ \begin{array}{ccc} 0 & 0 & 0 \\ 0 & 0 & 1 \\ 0 & 1 & 0 \\ \end{array} \right],
\left[ \begin{array}{ccc} 0 & 0 & 0 \\ 0 & 0 & -i \\ 0 & i & 0 \\ \end{array} \right],
\left[ \begin{array}{ccc} \frac{1}{\sqrt{3}}&0&0\\0&\frac{1}{\sqrt{3}}&0\\0&0&-\frac{2}{\sqrt{3}}\\\end{array} \right] \}$$
\end{tiny}
the structure constants $\alpha_{ijk}$ are and either $\pm 1, \pm 1/2, 0, \pm \sqrt{3}/2$. 
With Milnor's formula
$$  K_{ij} = \sum_k \frac{1}{2} \alpha_{ijk}(-\alpha_{ijk}+\alpha_{jki}+\alpha_{k12}) 
                -\frac{1}{4} (\alpha_{ijk}-\alpha_{jki}+\alpha_{kij})(\alpha_{12k}+\alpha_{2k1}-\alpha_{k12})
                -\alpha_{k11} \alpha_{k22}  \; . $$
The sectional curvatures are the entries $K_{ij}$ of
$$ K = \left[ \begin{array}{cccccccc}
    0 & \frac{1}{4} & \frac{1}{4} & \frac{1}{16} & \frac{1}{16} & \frac{1}{16} & \frac{1}{16} & 0 \\
    \frac{1}{4} & 0 & \frac{1}{4} & \frac{1}{16} & \frac{1}{16} & \frac{1}{16} & \frac{1}{16} & 0 \\
    \frac{1}{4} & \frac{1}{4} & 0 & \frac{1}{16} & \frac{1}{16} & \frac{1}{16} & \frac{1}{16} & 0 \\
    \frac{1}{16} & \frac{1}{16} & \frac{1}{16} & 0 & \frac{1}{4} & \frac{1}{16} & \frac{1}{16} & \frac{3}{16} \\
    \frac{1}{16} & \frac{1}{16} & \frac{1}{16} & \frac{1}{4} & 0 & \frac{1}{16} & \frac{1}{16} & \frac{3}{16} \\
    \frac{1}{16} & \frac{1}{16} & \frac{1}{16} & \frac{1}{16} & \frac{1}{16} & 0 & \frac{1}{4} & \frac{3}{16} \\
    \frac{1}{16} & \frac{1}{16} & \frac{1}{16} & \frac{1}{16} & \frac{1}{16} & \frac{1}{4} & 0 & \frac{3}{16} \\
    0 & 0 & 0 & \frac{3}{16} & \frac{3}{16} & \frac{3}{16} & \frac{3}{16} & 0 \\ \end{array} \right] \; . $$
Already $K_{12} K_{34} K_{56} K_{78} = 3/16384$ is non-zero. But the Euler characteristic is $0$. 
The volume of $SU(3)$ is given as 
$V(SU(3)) = Pi^5$. We measure the sum over all curvature quadruples to be $351/64$.
We have also $c_4 = \frac{1}{6144 \pi ^4}$. Therefore $\gamma_d(M)=117 \pi/2^17$.
Also here, the curvatures and $\gamma_d$ depend on the frame. 
If we make a rotation of the basis of $su(3)$ in the adjoint representation, 
then we measure in general different values. We have not yet attempted to 
compute the true $\gamma(M)$ as it requires to integrate over part of the rotation group.

\paragraph{}
The {\bf Klembeck example} \cite{Klembeck} is a simplification of the Geroch example \cite{Geroch}. 
Look at a patch of a 6-manifold with metric 
$\left[
                  \begin{array}{cccccc}
                   1-3 z^2 & -2 u z & 0 & 0 & 0 & 2 v y \\
                   -2 u z & 1-3 u^2 & 2 u x & 0 & 0 & 0 \\
                   0 & 2 u x & 1-3 v^2 & -2 v w & 0 & 0 \\
                   0 & 0 & -2 v w & 1-3 w^2 & 2 w z & 0 \\
                   0 & 0 & 0 & 2 w z & 1-3 x^2 & -2 x y \\
                   2 v y & 0 & 0 & 0 & -2 x y & 1-3 y^2 \\
                  \end{array}
                  \right]$. A direct computation gives
at $(x,y,z,u,v,w)=(0,0,0,0,0,0)$ the sectional curvatures $K=\left[ \begin{array}{cccccc}
                   0 & 0 & 3 & 0 & 3 & 0 \\
                   0 & 0 & 0 & 3 & 0 & 3 \\
                   3 & 0 & 0 & 0 & 3 & 0 \\
                   0 & 3 & 0 & 0 & 0 & 3 \\
                   3 & 0 & 3 & 0 & 0 & 0 \\
                   0 & 3 & 0 & 3 & 0 & 0 \\
                  \end{array} \right]$. Obviously, the curvature $K$ is non-negative here.
The Gauss-Bonnet-Chern integrand is $K_{GBC}=-9216/(6!)^2<0$. 
It is now possible to change the metric a bit to make it positive at $0$ while still
keeping $K_{GBC}$ negative. 

\section{Poincar\'e-Hopf}

\paragraph{}
Let $M$ be a $2d$-dimensional {\bf Riemannian polytop}. That means that $M$ is a
$2d$-dimensional compact Riemannian manifold with piecewise smooth boundary 
$\delta M$. We will later use a triangulation to decompose a general smooth 
compact Riemannian manifold into small contractible Riemannian polytopes having
the shape of simplices, but for much of what we doing here, no contractibility of 
$M$ is necessary; the manifold $M$ is just a Riemannian manifold with 
piecewise smooth boundary. 

\paragraph{} 
By the Nash's embedding theorem \cite{EssentialNash}, we can assume that $M$ is 
isometrically embedded in a larger dimensional Euclidean space $E$ and that each part of 
the boundary $\delta M$ is given as an intersection of 
regular level surfaces $\{ g_k=c \}$. 
Smooth Riemannian manifolds can always be triangulated 
(this only becomes difficult in the topological category) so that $M$ can be seen as a 
{\bf geometric realization} of a finite abstract simplicial 
complex of dimension $2d$ which is {\bf pure} in the sense that all maximal simplices have dimension $2d$. 
Without much loss of generality, we can restrict our discussion at first to the case when $M$ is a simplex. 
We will then later glue together such local {\bf cells}.

\paragraph{}
So, in order to fix the ideas, we assume that the Riemannian polytop 
$M$ is a smooth image $r(\Delta)$ of a {\bf $(2d)$-simplex}
$\Delta = \{ x \in \mathbb{R}^{2d+1}, x_i \geq 0$,
for all $i =0, \dots ,2d$ and $\{ \sum_{i=0}^{2d} x_i=1 \}$, where 
$r: \mathbb{R}^{2d+1} \to E$ is smooth. 
This produces a parametrization of $M$ by $\Delta$.  

\paragraph{}
Alternatively, it can be useful to look at the case, where
$M$ is a smooth image $r(Q)$ of a standard {\bf 2d-cuboid}
$Q = \{ x \in \mathbb{R}^{2d},  0 \leq x_i \leq 1 \}$ 
in $\mathbb{R}^{2d}$. A triangulation can be
obtained from such a cuboid 
by triangulating $Q$ into simplices, for example by a {\bf Freudenthal 
triangulation} \cite{DangTriangulations}. We especially have so
explicit triangulations of $\mathbb{R}^{2d}$ which are by Cartan-Hadamard
the universal covers of negatively curved manifolds or non-positively
curved manifolds. 

\paragraph{}
In the case of a triangulation, the simplicial complex structure 
of $\Delta$ goes over to $M$ so that to every point $x$, 
one can associate a {\bf dimension of the point}.
It is the largest dimension $k$ such that $x$ is in the interior of the image 
of the {\bf $k$-dimensional sub simplex} = {\bf k-face}
$\Delta_k \subset \Delta$ under the smooth map $r$. 
Having a well defined notion of {\bf dimension for points} will be useful
when defining what we mean with a {\bf Morse function} on such a manifold with
boundary. 

\paragraph{}
A function $f: M \to \mathbb{R}$ is defined to be {\bf smooth} if its 
restriction to the interior of each $k$-dimensional piece 
$r(\Delta_k)$ is smooth. If $M$ is isometrically embedded in an ambient Euclidean space, 
a smooth function $f: E \to \mathbb{R}$ induces a smooth function 
$f: M \to \mathbb{R}$ by restricting the domain from $E$ to $M$.  
A point in $M$ is a {\bf critical point} of $f$, if either it is 
in the interior of $M$ and $\nabla f(x)=0$ or if it is a Lagrange 
critical point at the boundary. 

\paragraph{}
If $x$ is in the interior of a $k$-dimensional part of the boundary 
that is given by the intersection of $2d-k$ functions 
$g_1, \dots, g_{2d-k}$ which each have no critical points,
then a {\bf Lagrange critical point} means that $\nabla f$ is 
a linear combination of gradients of $g_1, \dots, g_{2d-k}$.  These
are the Lagrange equations. 

\paragraph{}
The situation of critical points for general smooth functions of a
Riemannian polytop $M$ can be quite complicated in general. As investigated 
by mathematicians like Whitney or Thom, this leads to subjects like
{\bf singularity theory} or {\bf catastrophe theory}. The Morse set-up 
tames such difficulties.  Assuming $f$ to be Morse assures for example
that critical points are isolated, non-degenerate and so stable under
perturbations. Morse theory is a fantastic set-up because it achieves
accessibility similarly than complex analytic frame-works without the 
severe constraints, which the complex analytic category produces.

\paragraph{}
Much of the complexity is reduced by assuming a Morse condition.
This has to be defined carefully if one deals with manifolds with 
boundary. So, to express this more precisely, 
we assume that if $x$ is a critical point in $M_k$, 
then it is a {\bf Morse critical point} in the interior a $k$-dimensional 
part $M_k$ of $M$, then $f$ restricted to the interior of 
$M_k$ is Morse in the classical sense. Let us rephrase this: 

\paragraph{}
A function $f: M \to \mathbb{R}$ is called a {\bf Morse function} on 
a Riemannian polyhedron $M$ if for all critical points 
in the interior $M_{2d}$ of $M$, the standard Morse condition is 
satisfied, meaning that the Hessian $d^2 f(x)$ has full rank at 
such a critical point. Additionally, we want to have Morse conditions
in the smaller dimensional parts. The definition continues 
in the next paragraph. 

\paragraph{}
If the critical point $x$ is in the interior of a $k$-dimensional 
part $M_k$ of the boundary, we assume that all Lagrange 
multipliers $\lambda_j(x), j=1,\dots,2d-k$ are non-zero and $f$ restricted 
to the interior of $M_k$ is Morse.  A critical point $x$ then satisfies 
$$  \nabla f(x) = \sum_{j=1}^{2d-k} \lambda_j(x) \nabla g_j(x) \; , $$ 
where $M_k$ is contained in  $\{ g_1(x)= \dots = g_{2d-k}(x) = 0 \}$. 
The Morse condition for a critical point $x$ in $M_k$ assumes 
that $x$ not already a critical point in any of the parts $M_l$ with $l>k$. 

\paragraph{}
The {\bf Poincar\'e-Hopf index} of a Morse function $f$ at a critical point 
$x$ in a Riemannian polyhedron $M$ is 
defined as $i_f(x)=1-\chi(S_f(x))$, where $S_f(x)$ is the part of a 
sufficiently small geodesic sphere $S_r(x)$ around $x$, where $f$ is smaller or equal 
than $f(x)$. Formally, 
$$   i_f(x) = \chi(S_f(x)) 
    = \chi( \{ y \in S_{r,M}(x) \; | \; f(y) \leq  f(x) \} ) \; , $$ 
where $S_{r,M}(x) = \{ y \in M \; | \; d(x,y)=r \; \}$ is a geodesic sphere
near $x$, the set of points in $M$ of distance $r$ to $x$, measured in terms of
the Riemannian metric given on $M$.

\paragraph{}
In the above definition, we implicitly assume that $r>0$ is chosen sufficiently 
small at a point and usually not mention the radius $r$.
If $M$ is a compact Riemannian polyhedron and $x$ is a point in $M$, 
then there is a threshold $r_0=r_0(x)$ such that all 
$S_{r,M}(x)$ are homeomorphic for $0<r < r_0(x)$.
We can not chose a lower bound $r_0$ uniformly in $M$ as a
point can be close to the boundary, where $r_0$ has to be smaller than 
the distance to the boundary. (We can however get a fixed $r_0$ for every 
Morse function $f$.)  More generally, if we have
a $k$-dimensional point $x \in M_k$, then $r_0$ depends on 
the geodesic distance to the next $(k-1)$-dimensional point. 

\paragraph{}
In the interior of $M$, every critical point $x$ of a Morse function $f$
either has index $+1$ or $-1$. At the boundary, it is possible that 
the index of a critical point is zero. 
This already happens in very simple situations like if $M = \{ x^2+y^2 \leq 1 \}$ 
is a solid disc in $E=\mathbb{R}^2$ and $f$ is a linear function like $f(x)=y$. 
Only minima of $f$ at the boundary lead to critical points and no critical points 
exist in the interior. In this particular example, there are two critical points 
$(0,-1)$ and $(0,1)$ and $i_f(0,-1)=1-\chi(\emptyset)=1$ while $i_f(0,1)=1-1=0$. 
The total sum of all indices is $1=\chi(M)$ if $M$ is a simplex as it should
be for Poincar\'e-Hopf. 

\paragraph{}
As a side remark, in the case of a convex set $M$ in $\mathbb{R}^2$ bound by a smooth 
simple Jordan curve, the curvature obtained by averaging over 
all linear functions $f_a(x) = a \cdot x$ is now technically a 
{\bf distribution} (a generalized function) and
supported on the boundary of $M$ and $K(x) = {\rm E}[ i_f(x)]$ 
is the usual normalized {\bf signed curvature} $K(x)/(2\pi)$ 
which is for $x=r(t)$ with an arc-length
parametrization given by $|r''(t)|$. The Gauss-Bonnet theorem $\int_M K \; dV = 1$ is
then called the {\bf Hopf Umlaufsatz}. By the way, the boundary $\delta M$ of the convex
region, then it is a one-dimensional manifold for which the total curvature 
is zero, the reason being that $i_f(x)=-i_{-f}(x)$ then. 
This is compatible with $\chi(\delta M)=0$. 

\paragraph{}
Here is a lemma which explains why it is useful already to assume the probability spaces of
Morse functions to be invariant under the involution $f \to -f$. 
For gluing estimates on corners were several manifolds come together we need even 
an {\bf isotropy condition}, the rotational invariance of the probability spaces
under maps $f \to Af$ with an orthogonal transformation $A$ on $E$.  

\begin{lemma}
Assume $x \in M$ is a critical point of a Morse function $f$ and $x$ has dimension $k$. 
Either $f$ or $-f$ has an index which agrees with the index of $f$ restricted to $M_k$. 
\end{lemma}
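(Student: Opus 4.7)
The plan is to reduce to a local model at $x$ and use the product structure of $M$ in tangential and normal directions. By the Morse lemma for the restriction $f|M_k$ together with a tubular-neighborhood description of $M$ near $x$, introduce coordinates $(y,z) \in T_x M_k \times N_x$ so that $M$ looks locally like $M_k \times C$, where $C \subset N_x$ is the Fenchel cone generated by the inward-pointing boundary normals $\{\nabla g_j(x)\}_{j=1}^{2d-k}$. In these coordinates
\[
f(y,z) = f(x) + f_T(y) + L(z) + R(y,z),
\]
where $f_T(y) = \tfrac12 \langle Hy, y\rangle$ is the Hessian form of $f|M_k$ at $x$ (Morse of index $m$), $L(z) = \sum_j \lambda_j z_j$ is the linear Lagrange part, and $R$ collects higher-order terms.

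The key step is to show that the Morse condition at $x$ forces the Lagrange multipliers $\lambda_j$ to all share a common sign. If instead some $\lambda_{j_0}$ had opposite sign from the rest, then relaxing the $j_0$-th constraint produces a $(k{+}1)$-dimensional face $M_l \supset M_k$ on whose boundary $x$ remains a Lagrange critical point (with multipliers $\lambda_j$, $j \neq j_0$, all still nonzero), contradicting the requirement that $x$ not be critical on any higher-dimensional part. Replacing $f$ by $-f$ if necessary, we may therefore assume all $\lambda_j > 0$, so that $L(z) > 0$ on $C \setminus \{0\}$.

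With this sign arrangement, I claim that $S_f(x)$ deformation-retracts onto the tangential link $\{u \in S^{k-1} : f_T(u) \leq 0\} = S_{f|M_k}(x)$. After rescaling $(y,z) = r(u,v)$ with $(u,v)$ on the unit sphere, the defining inequality $f \leq f(x)$ becomes $r L(v) + \tfrac{r^2}{2} f_T(u) + O(r^3) \leq 0$; since $L(v) \geq c|v|$ on $C \cap S^{2d-k-1}$ by strict positivity and compactness, for small $r$ the sublevel set is confined to $|v| = O(r)$. Convexity of $C$ then permits a radial retraction $v \mapsto tv$, $t: 1 \to 0$, during which $L(v)$ decreases monotonically and the sublevel condition is preserved; the retraction stays inside $C$ and pushes the set onto the equator $\{v = 0\} \cap \{f_T \leq 0\}$. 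Hence $\chi(S_f(x)) = \chi(S_{f|M_k}(x))$ and therefore $i_f(x) = i_{f|M_k}(x)$, which is the claimed identity for the chosen sign of $\pm f$.

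The main obstacle is the sign-uniformity step: it relies on reading ``not a critical point on higher-dimensional parts'' as including Lagrange-type boundary criticality on those parts, and on the fact that the Fenchel cone is genuinely generated by the constraint normals so that dropping a single constraint produces an honest $(k{+}1)$-face on which $x$ lies in the Lagrange set. A secondary technical point is the retraction in the last step, where one must verify that the homotopy stays inside $C$ and on the (rescaled) sphere while dominating the higher-order remainder $R(y,z)$; convexity of $C$ together with a standard Morse-lemma straightening suffices, but the argument breaks down exactly when the Fenchel cone fails to be convex-self-dual, which foreshadows the orthotope hypothesis discussed earlier in the paper.
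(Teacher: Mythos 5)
Your sign-uniformity step is a genuine gap, and the argument collapses there. The deduction itself is invalid: if you drop the constraint $g_{j_0}$, the gradient of $f$ along the resulting $(k{+}1)$-face $M_l$ is the orthogonal projection of $\sum_j \lambda_j \nabla g_j(x)$ onto $T_xM_l$, which equals $\lambda_{j_0}$ times the (nonzero) projection of $\nabla g_{j_0}(x)$. So whenever $\lambda_{j_0}\neq 0$ the point $x$ is simply not a critical point of $f$ on $M_l$, and no contradiction with ``$x$ is not already critical on a higher-dimensional part'' arises; the sign of $\lambda_{j_0}$ never enters the criterion. (If you instead read ``critical on $M_l$'' as Lagrange-critical for the closed face, the same computation shows $x$ is such a point for every $j_0$ regardless of signs, so again no sign dichotomy can be extracted.) Moreover the claim is false in exactly the situations the paper uses: take $M=[0,1]^2\subset E=\mathbb{R}^2$, the corner $x=(0,0)$ of dimension $k=0$, and the linear function $f(x_1,x_2)=x_1-x_2$, a generic member of the paper's probability space of linear functions. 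With inward normals $\nabla g_1=(1,0)$, $\nabla g_2=(0,1)$ one has $\nabla f=\nabla g_1-\nabla g_2$: all multipliers nonzero, $f$ non-critical on both edges and in the interior, yet the signs are mixed. Neither $\nabla f$ nor $-\nabla f$ points into the dual cone, the sets $S_f(x)$ and $S_{-f}(x)$ are both arcs of the quarter circle, so $i_f(x)=i_{-f}(x)=0$, while the index of $f$ restricted to $M_0=\{x\}$ is $1$. Hence the identity $i_{\pm f}(x)=i_{f|M_k}(x)$ your argument delivers cannot be forced by Morse-ness; your retraction in the last step is fine, but it only covers the favorable case where $\nabla f$ (or $\nabla(-f)$) lies in the dual Fenchel cone, i.e.\ all $\lambda_j$ of one sign.

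This is also not the route the paper takes: its proof makes no attempt to normalize signs, but splits into the case where $\nabla f(x)$ points outside $M$ (index declared zero) and the case where it points inside, and then compares $S_r(x)\cap\{f\le f(x)\}\cap M$ with the corresponding sublevel set in $\delta M$ by a homotopy. The mixed-sign configuration at faces of codimension bigger than one is precisely where the self-dual cone (orthotope) condition of the later sections is needed, as you yourself anticipate in your closing remark; so the repair is not to derive sign uniformity from the Morse condition (it does not follow), but either to assume $\nabla f$ points into $\hat{C}\cup(-\hat{C})$ (for instance under the hypothesis $C=\hat{C}$), or to weaken the conclusion to the paper's dichotomy that the index is either zero or equal to the index of the restriction. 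A smaller imprecision: you take $C$ to be the cone generated by the inward normals, whereas the paper's Fenchel cone is the tangent cone generated by $B_r(x)\cap M$; these coincide only in the self-dual case, which conceals the same issue.
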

\begin{proof}
Either the gradient $\nabla f(x)$ points outside $M$ (in which case the index is zero) 
or then inside in which case the index $i(x)$ is $1-\chi(S_f^-(x))$ (in which case it can be zero or not). 
Now $S_r(x) \cap \{ f \leq 0 \} \cap M$ 
and $S_r(x) \cap \{ f \leq 0 \{ \cap \delta M$ are homotopic and so have the same 
Euler characteristic. 
\end{proof}

\paragraph{}
The simplest example which shows why the symmetry is useful is to take the 
two $1$-dimensional simplices $M_k$. Now, given any Morse functions $f_k$ on $M_k$
the index $i_f(x)$ is $0$ at one end (where $f$ is maximal) and $1$ at the other end 
(where $f$ is minimal). Assume now that  $\Omega_k = \{ f_k\}$ have just one element so
that the index is the curvature. Now glue the two segments together so that the two ends
with curvature $0$ are glued. After gluing, there is a curvature $-1$ in the interior
and curvature $1$ at the boundary. The total curvature is $1+1-1=1$ as it should be
but we have some glueing curvature in the intersection. Now, in the symmetric situation,
we have curvature $1/2$ at each end. After gluing, we have again a larger $1$-dimensional
$M$ with boundary which has curvature $1/2$ at the boundary and $0$ curvature at the
point, where the gluing has taken place. The gluing measure now has disappeared.

\paragraph{}
{\bf Remark.} The result could be generalized to more general {\bf Riemannian polyhedra}, 
manifolds which are piecewise smooth and have piecewise smooth Riemannian manifolds as boundaries. 
For us here, we only need the result, where $M$ is homeomorphic to a topological ball
and a geometric realization of a $2d$-simplex $\Delta$ inside a Euclidean space $E$. 

\paragraph{} 
{\bf Remark.} The usual assumption for {\bf Poincar\'e-Hopf for manifolds with boundary}
is that the vector field $F=\nabla f$ points outwards everywhere on the boundary.
We do not make this assumption here. It is the assumption that $f$ is Morse that makes it easier. 
This appears to be a fresh version of Poincar\'e-Hopf. Usually, the Poincar\'e-Hopf theorems
for manifolds $M$ with boundary assume that the vector field is everywhere 
perpendicular to the boundary $\delta M$ and never zero on the boundary. 
Other versions of Poincar\'e-Hopf statements for manifolds with boundary are \cite{Pugh1968}. 

\paragraph{}
Our definition of index as an Euler characteristic of a stable sphere has 
the advantage that {\bf it is the same than in the discrete} and works for 
any network (a finite simple graph with Whitney complex) and any finite abstract simplicial complex $G$.
Such a simplicial complex $G$ does not even have to be a triangulation of a manifold, it can 
be an arbitrary finite set of non-empty sets closed under the operation of taking finite 
non-empty subsets. 

\begin{thm}[Poincar\'e-Hopf]
If $f$ is a Morse function on a compact Riemannian polyhedron, 
then the sum of the indices of $f$ is the Euler characteristic of $M$. 
\end{thm}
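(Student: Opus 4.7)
The plan is to track the Euler characteristic of the sublevel sets $M_c = \{x \in M : f(x) \leq c\}$ as $c$ sweeps from below $\min_M f$ (where $M_c = \emptyset$) to above $\max_M f$ (where $M_c = M$). The telescoping sum of the jumps in $\chi(M_c)$ across critical values should then equal $\chi(M)$, so I aim to show two things: that $\chi(M_c)$ is constant across regular intervals, and that the jump at each critical value $c_0$ equals the sum of local indices $i_f(x)$ taken over critical points $x$ with $f(x) = c_0$.

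The first claim, that $\chi(M_a) = \chi(M_b)$ whenever $[a,b]$ contains no critical value of $f$, is a stratified version of the classical gradient-flow argument. On the interior stratum $M_{2d}$ one flows along $-\nabla f/|\nabla f|^{2}$, and on each boundary stratum $M_k$ one uses the intrinsic gradient of $f|_{M_k}$, which by the Morse hypothesis is non-vanishing throughout $[a,b]$. A partition of unity subordinate to tubular neighborhoods of the strata assembles these pieces into a continuous flow that preserves the stratification, traverses through $M$ in unit time, and yields a homeomorphism $M_a \to M_b$.

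For the jump at a critical value $c_0$, finiteness of critical points lets me assume a single critical point $x_0$ of dimension $k$ at height $c_0$. Choose $r \in (0, r_0(x_0))$ and $\epsilon > 0$ small, and set $U = B_r(x_0) \cap \{f \leq c_0 + \epsilon\}$; this is contractible by construction, so $\chi(U) = 1$. Its intersection with the old sublevel set,
\[
U \cap M_{c_0 - \epsilon} \;=\; B_r(x_0) \cap \{f \leq c_0 - \epsilon\},
\]
deformation-retracts onto $S_r(x_0) \cap \{f \leq c_0\} = S_f(x_0)$ by radial push-out away from $x_0$. Inclusion-exclusion then gives
\[
\chi(M_{c_0 + \epsilon}) - \chi(M_{c_0 - \epsilon}) \;=\; \chi(U) - \chi(U \cap M_{c_0 - \epsilon}) \;=\; 1 - \chi(S_f(x_0)) \;=\; i_f(x_0),
\]
and summing over all critical values yields $\chi(M) = \sum_x i_f(x)$.

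The principal obstacle is the local normal form at a boundary or corner critical point, which is needed both to justify the deformation retract of $U \cap M_{c_0 - \epsilon}$ onto $S_f(x_0)$ and to guarantee that the stratified flow in the first step glues together continuously. At an interior point of dimension $2d$ the classical Morse lemma supplies the standard quadratic model and the rest is textbook. At a $k$-dimensional point one must combine the Morse lemma for $f|_{M_k}$ with the non-zero Lagrange multipliers $\lambda_1, \ldots, \lambda_{2d-k}$: the signs of the $\lambda_j$ determine, direction by direction, whether $f$ increases or decreases as one moves off $M_k$ into $M$, and the preceding lemma handles the case in which $\nabla f$ points outward by replacing $f$ with $-f$. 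Sorting out these cases and verifying that the $\lambda_j \neq 0$ condition prevents degeneracies supplies the local product structure that makes the inclusion-exclusion computation valid, completing the proof.
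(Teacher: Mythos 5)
Your argument is correct in outline, but it is a genuinely different route from the one the paper takes. You run the classical sublevel-set sweep: constancy of $\chi(\{f\le c\})$ across regular intervals via a stratified gradient-like flow, plus a local handle-attachment computation showing the jump at a critical value is $1-\chi(S_f(x_0))=i_f(x_0)$ --- in effect a manifolds-with-corners instance of stratified Morse theory. The paper instead discretizes: it chooses a triangulation of $M$ (and of its boundary strata) \emph{adapted to} $f$, fine enough that each small sphere $S_r(x)$ at a critical point is itself triangulated with $f$ locally injective on vertices, and then invokes the purely combinatorial Poincar\'e--Hopf identity for finite abstract simplicial complexes, obtained by transporting the weight $\omega(x)=(-1)^{\dim(x)}$ along the map sending each simplex to its $f$-minimal vertex, so that $\sum_v i_f(v)=\chi(G)=\chi(M)$ with the discrete indices matching the continuum ones by construction. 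The paper's route buys near-total avoidance of analysis --- no normal forms, no flows, only a counting identity --- and fits the discrete index framework used throughout; its debt is the existence of an adapted triangulation, which it asserts rather than constructs. Your route is self-contained in smooth/stratified topology and gives more (the homotopy type of sublevel sets), but its debts are exactly the two points you flag: (i) the local model at a $k$-dimensional corner critical point, where the Morse lemma for $f|_{M_k}$ together with the nonvanishing Lagrange multipliers must be upgraded to a product normal form $f\approx f|_{M_k}+\ell$ on $M_k\times(\mathrm{cone})$ justifying both the contractibility of $U$ and the radial retraction onto $S_f(x_0)$; and (ii) the gluing of the stratum-wise gradients into a single gradient-like controlled vector field. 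One further small imprecision to repair: $M_{c_0+\epsilon}$ is not literally $M_{c_0-\epsilon}\cup U$, so before applying the valuation property you need the standard deformation retraction of $M_{c_0+\epsilon}$ onto $M_{c_0-\epsilon}\cup U$ given by flowing down off the region away from $x_0$; with that inserted, the inclusion-exclusion step and the telescoping sum go through as you describe.
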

\begin{proof}
A simple proof is obtained by making a triangulation of 
$M$ and boundary which is {\bf adapted to the function $f$}.
This means that the triangulation should also triangulate 
each sphere or half-sphere $S_r(x)$, whenever $x$ is a 
critical point of $f$. We can then use the result for finite 
abstract simplicial complexes. That argument is so simple, that
it can be repeat it in detail in the next paragraph.
Since the Euler characteristic of the graph is the same than 
the Euler characteristic of the manifold for which the graph
is the 1-skeleton of a triangulation of, and the indices are the 
same, we are done. 
\end{proof}

\paragraph{}
In order not to keep this paper self-contained (we could refer to 
\cite{KnillEnergy2020} although),
let us state the result for a {\bf finite abstract simplicial complex} $G$, 
which is a finite set of non-empty subsets of a finite set $V$ 
closed under the operation of taking non-empty subsets. 
The Euler characteristic of $G$ is 
$\sum_{x \in G} \omega(x)$, where 
$\omega(x)=(-1)^{{\rm dim}(x)} = (-1)^{|x|-1}$, where $|x|$ is the 
cardinality of $x$. For any map $F:G \to V$ satisfying $F(x) \in x$, 
the index $i_F(x) =\chi(F^{-1}v)$ satisfies 
$\sum_{x} i_F(x) = \chi(G)$ because the energy $\omega(x)$ 
can be transported along $F$ to $V$. 
Now, if $G$ is the set of complete subgraphs of a graph $(V,E)$ and 
$f: V \to \mathbb{R}$ is a function which is locally injective 
in the sense that $f(x) \neq f(y)$ if $x$ and $y$ are connected, 
then it defines $F(x)$ as the vertex in the simplex $x$, where $f$ is minimal. 
The corresponding index is the Poincar\'e-Hopf index 
$i_f(x) = 1-\chi(S_f^-(x)))$, where $S_f^-(x)$ is the graph generated by the
subset of all $y$ directly attached to $x$ and where $f(y)<f(x)$. 
The formula $\chi(G) = \sum_{v \in V} i_f(v)$ is the
Poincar\'e-Hopf relation. Now if $M$ is a Riemannian polyhedron 
and $f: M \to \mathbb{R}$ is a Morse function we have only finitely 
many critical points. Chose now a small radius $r>0$ and triangulate $M$ 
such that every sphere $S_r(x)$ is triangulated in such a way that $f$ 
is locally injective, if $x$ is a critical point. 
This now assures that the simplicial complex
$G$ of the triangulation has the Euler characteristic of $M$ 
and that the index $i_{f,M}(x)$ of every critical 
point is the index $i_{f,G}(x)$. 
Now, the Poincar\'e-Hopf theorem for the Riemannian polyhedron $M$ 
follows from the Poincar\'e-Hopf theorem for simplicial complexes $G$. 

\paragraph{}
{\bf Example:} If $M = r([a,b]) \subset E$ is a smooth immersed curve with boundary $\{ r(a), r(b) \}$ and $\Omega$ is 
the set of linear functions on $E$, then for almost all $f \in \Omega$, the induced function 
is Morse meaning that at all places where $t \to f(\vec{r(t)})$ has zero derivative, the second
derivative is non-zero. Additionally, we want that there are no critical points at the boundary. 
Now, the total index in the interior of $M$ is zero, the reason being that $f$ and $-f$ have different
indices. The index expectation at the end points is $1/2$ as for half of the functions
an end point is a minimum having index $1$. 

\paragraph{}
{\bf Example:} Let $M=r( [0,2\pi] \times [0,\phi] )$ with 
$r(\theta,\phi) = [ \cos(\theta) \sin(\phi), \sin(\theta) \sin(\phi),\cos(\phi)]$. 
The curvature induced on the interior of $M$ is $1$. The total curvature of the interior is 
the area $A/(2\pi) = (2\pi (1-\cos(\phi)))/(2\pi) = 1-\cos(\phi)$. 
The total curvature of the boundary is $\cos(\phi)$. For $\phi=0$, then all the curvature is
on the boundary. 

\paragraph{}
{\bf Example:} If $M$ is the $2d$-ellipsoid realized as 
$\sum_{i=1}^{2d+1} x_i^2/a_i^2$ in $\mathbb{R}^{2d+1}$ and $f(x) = x_{2d+1}$ 
then there are two critical points, the maximum $A=(0,0,\dots,a_{2d+1})$ and the
minimum $B=(0,0,\dots,-a_{2d+1})$. The stable sphere $S_r(A)$ is a $(2d-1)$ dimensional 
ellipsoid with Euler characteristic $0$ so that $i_f(A) = 1- \chi(S_r(A))=1$. 
The stable sphere $S_r(B)$ is empty with Euler characteristic $0$ so that also $i_f(B)=1$. 
Poincar\'e-Hopf shows that $\chi(M)=1+1=2$. This Morse function $f$ is given as $f(x) = x \cdot u$
with $u=(0,0,\dots, 1)$. It generalizes to any $f(x)=x \cdot u$, where $\sum_{i=1}^{2d+1} u_i^2=1$. 
In each case, the strict convexity of $M$ assures that there are only positive index critical points. 

\paragraph{}
{\bf Example:} The $2d$-torus $M$ can be equipped with a metric by embedded in $\mathbb{R}^{3d}$ 
using a parametrization $r(s_1,t_1,\dots ,s_d,t_d)=$ 
$[(2+\cos(s_1)) \cos(t_1),(2+\cos(s_1)) \sin(t_1)$, \dots,
                              $(2+\cos(s_d)) \cos(t_d),(2+\cos(s_d)) \sin(t_d)$ $]$. The
manifold $M$ is obviously the product of $d$ two-dimensional tori $M_k$. Unlike in the previous example,
not all functions $f(x) = x \cdot u$ are Morse. The function $f(x)=x_{3d}$ for example is not,
because the Hessian at a critical point has a $(2d-d)$-dimensional kernel. But the function 
$f(x)=\sum_{k=1}^d x_{3k}$ is Morse. There are now $4^d$ critical points of the form 
$v=(v_1,\dots, v_d)$ with $v_k \in \mathbb{R}^3$ being critical points of $f_k(x)=x_{3k}$. 
The index is $i_f(v) = \prod_{k=1}^d i_{f_k}(v_k)$. If $a_k,b_k,c_k,d_k$ are the critical points
of $f_k$, then $\chi(M_k) = i_{f_k}(a_k) + i_{f_k}(b_k) + i_{f_k}(c_k) + i_{f_k}(d_k) = 0$. 

\begin{figure}[!htpb]
\scalebox{0.4}{\includegraphics{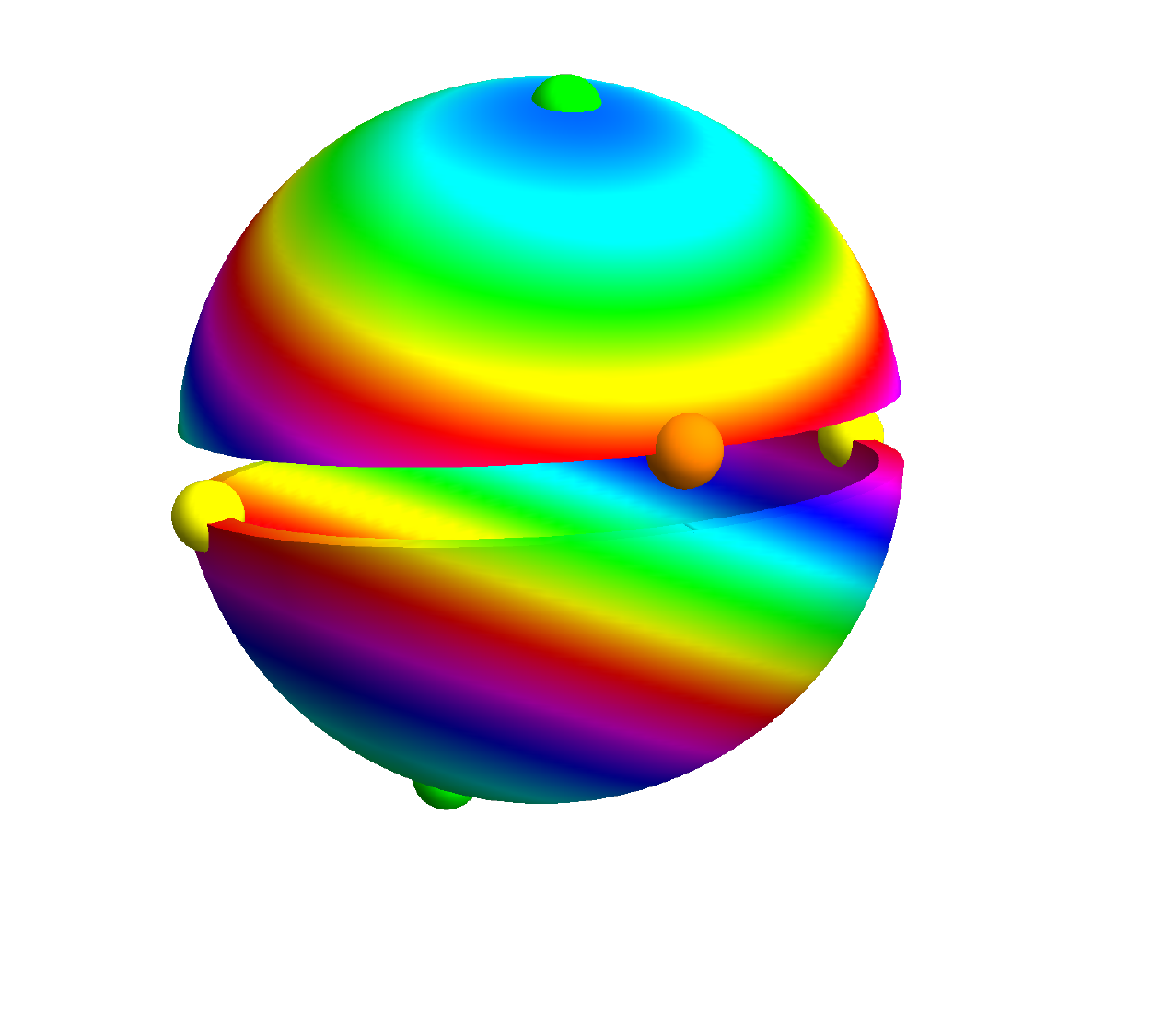}}
\label{sphere2}
\caption{
A sphere is orthotope as a union of two orthotope Riemannian manifolds with boundary
(balls). In this example, on each ball $M_j$, a function is given with three critical points
where two are on the boundary $\delta M_j$ circle. The indices of the interior 
indices are $1$, the indices on the boundary are $1$ and $-1$. The sum of all indices 
in each half sphere is $1$, the Euler characteristic of $M_j$. Once glued, the
boundary curvatures can be removed together with the boundary manifold. When doing gluing
along smooth hypersurfaces, we only need the probability spaces to be invariant under
the $f \to -f$ symmetry and not the full isotropy assumption.
}
\end{figure}

\section{Probability spaces}

\paragraph{}
Let $M$ be a Riemannian polyhedron embedded in an ambient linear Euclidean space $E$ 
and let $\Omega$ denote the set of all Morse functions on $M$ in the sense defined in 
the last section. By Sard's theorem, this space is rather large. 
For example, in the set of linear functions $f_a$ given a point $a$ in the unit sphere 
$\mathbb{S}$ of $E$ defined by $f_a: E \to \mathbb{R}$ ,there is an early result of Morse:

\begin{lemma}
Let $M$ be a Riemannian polyhedron, a Riemannian manifold with boundary. 
For almost all $a \in \mathbb{S}$, the function $f_a$ restricted to $M$ is Morse in the
sense defined in the last section. 
\end{lemma}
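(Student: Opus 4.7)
The plan is to stratify the polyhedron $M$ into its finitely many smooth open strata $M_k^\alpha$, where $k$ ranges over $0, 1, \dots, 2d$ and $\alpha$ indexes the faces of dimension $k$, and then verify each clause of the Morse condition one stratum at a time. Since the stratification is finite, a finite union of null sets in $\mathbb{S}$ remains null, and the lemma follows.

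First I would handle the classical interior Morse condition on a single stratum. An open stratum $N = M_k^\alpha$ is a smooth embedded submanifold of $E$, and for the linear function $f_a(x) = a \cdot x$ one has $(\nabla f_a|_N)_x = P_{T_xN}(a)$. Thus $x$ is a critical point of $f_a|_N$ iff $a$ lies in the normal space $N_xN$, and such a critical point is non-degenerate iff $a$ is a regular value of the Gauss map $\nu : \nu N \to \mathbb{S}$ out of the unit normal bundle. Because $\dim \nu N = \dim E - 1 = \dim \mathbb{S}$, Sard's theorem produces a full-measure set of regular values, which gives classical Morseness on $N$ for almost every $a \in \mathbb{S}$. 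This is the Morse lemma the paper alludes to and is the main classical input.

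Next I would verify the non-vanishing of Lagrange multipliers. Writing $N = M_k^\alpha$ locally as $\{ g_1 = \cdots = g_{2d-k} = 0 \}$ with independent $\nabla g_j$, at a critical point of $f_a|_N$ the decomposition $a = \sum_j \lambda_j(x) \nabla g_j(x)$ is unique. The condition that some $\lambda_j$ vanishes restricts $a$, fiberwise over $x$, to a finite union of hyperplanes in $N_xN$, so the corresponding locus in $\nu N$ has codimension at least one, and its image in $\mathbb{S}$ under $\nu$ is measure zero. Excising this from $\mathbb{S}$ ensures the author's Lagrange-multiplier condition generically.

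Finally I would exclude coincidental criticality on higher-dimensional strata. If $M_l^\beta$ is an open stratum with $l > k$ whose closure meets $N$, then a point $x \in N$ at which the limiting tangent space $\lim T_y M_l^\beta$ is perpendicular to $a$ imposes a codimension-at-least-one condition on $a \in \mathbb{S}$, controlled again by a Sard argument applied to the appropriate bundle of limiting normal directions along the boundary incidence $N \subset \overline{M_l^\beta}$. Removing this null set guarantees that a critical point detected on $M_k$ is not already a critical point of $f_a|_{M_l^\beta}$ for any higher $l$. Taking the (finite) union over all strata and all incidence pairs yields a measure-zero exceptional set in $\mathbb{S}$. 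The main obstacle is simply the bookkeeping: keeping track of the stratification and its adjacency relations, and verifying that the boundary incidences along which one must exclude "inherited" critical points are themselves smooth enough for Sard to apply. Once the stratification is fixed, each exclusion is a standard transversality argument and the finite union completes the proof.
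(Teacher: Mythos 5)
Your proposal is correct and follows essentially the same route as the paper: Sard's theorem applied stratum by stratum to the linear functions $f_a$, with the finitely many exceptional null sets in $\mathbb{S}$ united at the end. Your write-up is in fact more careful than the paper's own sketch, since you make the Sard input precise via the Gauss map of the unit normal bundle and explicitly handle the nonvanishing of the Lagrange multipliers and the exclusion of critical points inherited from higher-dimensional strata, points the paper subsumes under the remark that finitely many measure-zero sets have measure-zero union.
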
 
\begin{proof}
For almost all $a$, the critical points of $f_a$ in the interior of $M$
are Morse: the gradient $g=df_a=a$ defines a smooth map $g: M \to E$. Almost all values $a \in E$ are
regular values for $g$ by the {\bf Sard theorem}. For a regular value, $dg(x)=H(x)$ has maximal rank.
In our case, where $M$ has boundary parts, 
we also want to assure additionally that $f$ induces Morse functions in the interior
of each boundary part $M_k$ of $M$. There are finitely many simplices and as the intersection 
of finitely many set of full measure have full measure:
let $H_k$ denote the Hessian of $f$ on $M_k$. 
Since each $\Omega_k = \{ f \in \Omega \; | \; \nabla f(x)=0, {\rm det}(H_k(f)(x))=0 \}$
has zero measure also the union has zero measure. 
\end{proof}

\paragraph{}
Let $M=r(T)$ be triangular surface in $E=\mathbb{R}^3$, where $T$ is a triangle in $\mathbb{R}^2$. 
The set $\Omega$ is now a circle. All except the three points $a_i$ for which $a_i$ is perpendicular
to one of the sides are Morse. Each function $f$ has just one critical point, the vertex where $f$ is
minimal and where $S_f(x)$ is empty. 

\paragraph{}
Let $M$ be a simply connected region in the plane with smooth boundary. The gradient of $f$ restricted to the interior
is never zero so that the curvature in the interior is zero and only Lagrange critical points contribute.
The curvature is $\alpha'(t)/(2\pi)$, if $r(t) = e^{i \alpha(t)} r(t)$ is the description of a parametrization with 
constant speed. Hopf's Umlaufsatz assures that the total curvature is $1$, which is the Euler characteristic of $M$. 
When looking at the boundary, then every function $f$ has at least two critical points, the maximum (with index $-1$)
and minimum with index $1$. A function is Morse if the critical point on the boundary. 

\paragraph{}
If the $2d$-manifold $M$ admits a global frame bundle and $\Omega$ is a probability space of Morse functions, we look at
the {\bf product probability space} $\Omega^d = \Omega \times \cdots \times \Omega$ and define there for any 
points $y_1, \dots, y_d \in M$ 
\begin{eqnarray}
 f(x_1,x_2, \cdots, x_{2d-1},x_{2d}) &=& f_1(x_1,x_2,y_{13}, \dots  y_{1d}) + f_2(y_{21},y_{22},x_3,x_4, \dots, y_{2d})  \\
                                     &+& f_d(y_{d1},y_{d2}, \dots, x_{2d-1},x_{2d})  \; .
\label{construction}
\end{eqnarray}
The coordinates at the point $x$ are given by the global section of the frame bundle given in $M$.
We can assume the coordinates to be {\bf normal} obtained by using the exponential map of a small disc in $T_x(M)$ to get
the grid lines. We can look at the function $f$ as a random variable on $\Omega \times M)^d$, where each 
$y_k \in M$ is an {\bf additional parameter} to keep the individual parts independent. 
Each of the summands is now a function of two variables only. 
Let us remark that construction~(\ref{construction}) produces from convex functions $f_i$ a new convex function $f$. 
For smooth functions one can see that from the fact that the Hessian matrix $d^2f$ is positive semi-definite if
each of the Hessians $d^2 f_k$ is positive semi-definite \cite{Rockafellar}. 

\paragraph{}
The indices of $f$ relate to indices restricted to $2$-dimensional planes which allows us to extract sectional curvature.

\begin{lemma}
If $x$ is a critical point of $f$, then each $(x_{2k},x_{2k+1})$ is a critical point of $f_k$ and 
$i_f(x) = i_{f_1}(x_1,x_2) \cdots i_{f_d}(x_{2d-1},x_{2d})$. 
\end{lemma}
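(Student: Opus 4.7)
The plan is to exploit the fact that $f$ is a sum of functions with disjoint variable blocks, so both the first-order (critical point) conditions and the second-order (Hessian) structure decouple.

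First I would verify the critical point factorization. Because $f_k$ only depends on the two coordinates $(x_{2k-1},x_{2k})$ (the remaining slots are frozen as parameters $y_{kj}$), the full gradient decomposes as $\nabla f(x) = \sum_k \nabla f_k$, where $\nabla f_k$ is supported on the $(2k-1,2k)$-block. Thus $\nabla f(x) = 0$ if and only if each planar gradient $\nabla f_k(x_{2k-1},x_{2k})$ vanishes; at a boundary point the Lagrange conditions split in the same way since each constraint $g_j$ cuts out a product structure in the chosen normal coordinates. This proves the first assertion.

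Next I would handle the index multiplicativity by reducing to the classical Morse formula. In the normal coordinates at $x$ the Hessian $d^2 f(x)$ is block-diagonal with the planar Hessians $d^2 f_k(x_{2k-1},x_{2k})$ as blocks, so $f$ is Morse at $x$ exactly when each $f_k$ is Morse at its planar critical point, and the Morse index $m(f,x)$ equals $\sum_k m(f_k, x^k)$. By the Morse lemma we can write $f - f(x) = -\sum_{i \leq m} y_i^2 + \sum_{i>m} y_i^2$ near $x$, and the descending set on a small geodesic sphere $S_r(x)$ deformation retracts onto $S^{m-1}$. Using $\chi(S^{m-1}) = 1 + (-1)^{m-1}$ (interpreted as $0$ when $m=0$) one recovers $i_f(x) = 1-\chi(S_f^-(x)) = (-1)^{m(f,x)}$, and similarly $i_{f_k}(x^k) = (-1)^{m(f_k,x^k)}$ for each planar factor. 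Additivity of the Morse indices then yields $i_f(x) = \prod_k (-1)^{m(f_k,x^k)} = \prod_k i_{f_k}(x_{2k-1},x_{2k})$.

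The only subtle point is boundary critical points, where the index can be $0$ rather than $\pm 1$ and the Morse lemma must be replaced by its version with Lagrange multipliers. The expected remedy is that in the chosen normal coordinates the relevant polyhedral cell is itself a product of the planar cells (one factor per coordinate plane), so the descending set $S_f^-(x)$ appearing in the definition $i_f(x) = 1-\chi(S_f^-(x))$ is the join of the planar descending sets $S_{f_k}^-(x^k)$; the Euler-characteristic identity $\chi(A \star B) = \chi(A) + \chi(B) - \chi(A)\chi(B)$ for joins then translates into $1-\chi(A\star B) = (1-\chi(A))(1-\chi(B))$, giving exactly the multiplicativity $i_f = \prod_k i_{f_k}$. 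Verifying this product-of-cells identification for a general Lagrange-boundary critical point is the main technical obstacle; for interior critical points, which are the generic case used downstream, the Hessian argument above suffices.
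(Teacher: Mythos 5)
Your proposal is correct and follows essentially the same route as the paper's proof: the gradient of the block-sum vanishes exactly when each planar gradient does, and the block-diagonal Hessian makes the Morse indices add, so the Poincar\'e-Hopf indices $(-1)^{m}$ multiply. Your additional treatment of Lagrange-boundary critical points via the join identity $1-\chi(A\star B)=(1-\chi(A))(1-\chi(B))$ is a refinement the paper's terse proof simply omits, not a different method.
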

\begin{proof}
The gradient of $f$ is zero means that the gradients of each of the summands is zero. The index is independent
of the coordinate system. As we have essentially a product situation, the indices multiply:
the number of negative eigenvalues of the Hessian add when taking a direct product. 
\end{proof} 

\section{Gauss-Bonnet}

\paragraph{}
Given a Riemannian polytop $M$ embedded in $E$, that is a compact Riemannian $2d$-manifold $M$ with boundary 
embedded in an Euclidean space $E$, we look at a space $\Omega$ of smooth functions on $E$ which 
induce Morse functions on $M$. Assume also that a probability measure $\mu$ on $\Omega$ is given. 

\paragraph{}
In our case, a natural probability measure is the normalized volume measure on the unit sphere $S$ of $E$,
where for each $a \in S$, the function is given by $f_a(x) = a \cdot x$.
The set of Morse functions in $\Omega$ has then full measure. Let $\mathcal{A}$ denote
the Borel $\sigma$-algebra on $\Omega$ and $dV$ the volume measure on $M$ normalized to be a 
probability measure and so a probability space $(\Omega,\mathcal{A},\mu)$. Poincar\'e-Hopf assures that
for $f \in \Omega$ that $\int_M i_f(x) dV = \chi(M)$, where $i_f(x)$ is the index measure
supported on finitely many points and $\chi(M)$ the Euler characteristic.  

\paragraph{}
A probability measure $\mu$ on $\Omega$ defines an {\bf index expectation curvature}
$K(x)=\int_{\Omega} i_f(x) d\mu$ which depending on the measure $\mu$ can also be
a generalized function, actually a measure. On a Riemannian polytop $M$ with boundary $\delta M$, 
we have the following prototype result.

\begin{thm}[Gauss-Bonnet]
$\chi(M) = \int_M K(x) dV$. We can split $K = k + \kappa$ into curvature $k$ in the interior
and curvature $d\kappa$ on the boundary $\delta M$.
\end{thm}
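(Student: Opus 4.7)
The plan is to deduce this from the Poincar\'e-Hopf theorem just established for Riemannian polyhedra, essentially by integrating the identity $\sum_{x \in M} i_f(x) = \chi(M)$ over the probability space $(\Omega,\mathcal{A},\mu)$. Since the right hand side is constant in $f$, we have
$$ \chi(M) \;=\; \int_\Omega \chi(M) \, d\mu(f) \;=\; \int_\Omega \sum_{x \in M} i_f(x) \, d\mu(f). $$
For each Morse $f \in \Omega$, the critical set is finite and the indices are bounded by $1$ in absolute value, so the signed counting measure $\nu_f = \sum_{x \,:\, df(x)=0 \text{ (Lagrange)}} i_f(x)\,\delta_x$ is a finite Borel measure on $M$ with total mass $\chi(M)$.

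The second step is a Fubini interchange. Using that $|\nu_f|(M) \leq N(f)$ is dominated on a neighborhood of a given $f$ by a uniformly bounded count (the Morse condition is stable under perturbation, so locally on $\Omega$ the number and location of critical points vary continuously), we exchange the order of integration and write
$$ \chi(M) \;=\; \int_M \, d\bar{\nu}(x), \qquad \bar{\nu} \;=\; \int_\Omega \nu_f \, d\mu(f). $$
The measure $\bar{\nu}$ on $M$ is by definition the index expectation measure, and $K(x) = E[i_f(x)]$ is its density wherever it is absolutely continuous with respect to $dV$.

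Next I would split the critical points by the stratum they live in. Interior critical points contribute the bulk curvature $k(x)\,dV(x)$: for a natural measure $\mu$ (for example, the uniform measure on linear functions $f_a(x) = a \cdot x$ parameterized by $a$ in the unit sphere of $E$), the map $a \mapsto x(a)$ taking $a$ to the unique nearby interior critical point is locally a diffeomorphism by the Morse condition, and the change-of-variables formula gives an absolutely continuous push-forward on $M_{2d}$ with density $k(x) = \int_\Omega i_f(x)\,d\mu(f)$. Boundary Lagrange critical points, whose $f$-parameters form a lower-dimensional subfamily of $\Omega$, produce a measure $d\kappa$ concentrated on $\delta M$; stratifying the boundary into the smooth pieces $M_k$ and running the same Sard/pushforward argument on each piece yields the singular part.

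The main obstacle is the Fubini-type exchange and the decomposition just described, because the measures $\nu_f$ are signed and their supports (the critical sets) depend on $f$. One must check that the family $\{\nu_f\}$ is jointly measurable on $\Omega \times M$ and uniformly bounded in total variation on some dense open subset of $\Omega$; this is where the Morse assumption and the transversality/Sard lemma of the previous section really enter. Granting this, the decomposition $K = k + \kappa$ into bulk and boundary contributions follows from partitioning the critical set of each $f$ along the stratification $M = M_{2d} \sqcup \bigsqcup_{k<2d} M_k$ and linearity of the expectation. Poincar\'e-Hopf applied pointwise in $f$ then delivers $\chi(M) = \int_M K\,dV = \int_{M^\circ} k\,dV + \int_{\delta M} d\kappa$.
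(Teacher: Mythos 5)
Your proposal is correct and follows essentially the same route as the paper: the paper's proof is simply to integrate the Poincar\'e-Hopf identity $\sum_x i_f(x)=\chi(M)$ over the probability space and invoke Fubini, with the splitting $K=k+\kappa$ being nothing more than separating the index-expectation measure into its interior and boundary parts (the interior part being absolutely continuous for isotropic measures). You supply more measure-theoretic detail (joint measurability, domination, the Sard/pushforward argument for absolute continuity) than the paper's one-line proof, but the underlying argument is the same.
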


\begin{proof}
Fubini's theorem gives Gauss-Bonnet $\int_M K(x) dV = \chi(M)$. The splitting into interior
and boundary curvature is just notation. For isotropic measures $\mu$, the measure $k$ in the 
interior of $M$ is absolutely continuous.
\end{proof} 

\paragraph{}
Both $k$ and $\kappa$ depend on the probability space $(\Omega,\mathcal{A},\mu)$. 
This is very general. For this result, the manifold can also be a disjoint union 
of manifolds $M_j$ with boundary which can have different dimensions. 
As we will see on each of these manifolds $M_j$, a different probability space 
can be taken. We can therefore also write 
$$ \chi(M) = \sum_{j=0}^{2d} \int_{M_j} K_j(x) dV_j(x) \; ,  $$
where $K_j(x)$ is the curvature on the $k$-dimensional part $M_j$ of $M$
and $dV_j$ the volume measure on that open Riemannian $j$-dimensional manifold $M_j$. 
What we will have to establish is that after gluing the interior curvature can be
discarded. 

\paragraph{}
{\bf Remark 1):} The manifold $M$ could be replaced with a geometric realization of a finite abstract
simplicial complex, where each $k$-simplex is equipped with a Riemannian metric 
coming from a neighborhood of the complex in an embedding in $\mathbb{R}^k$. 
The complex $G$ does not have to be pure. 
The result even extends to {\bf chains}, which are finite linear combinations 
$c=\sum_i c_i \sigma_i$, where $\sigma_i$ are simplices and $c_i$ are real numbers. 
Curvature is then just defined as $K=\sum_i c_i K_i$, where $K_i$ are curvatures
on $\sigma_i$. The Euler characteristic of a chain is 
$\sum_i c_i \chi(\sigma_i) = \sum_i c_i$ as $\chi(\sigma_i)=1$. 

\paragraph{}
{\bf Remark 2):} The valuation property $\chi(A \cup B) = \chi(A)+\chi(B)-\chi(A \cap B)$ 
of Euler characteristic could also be extended to more general sets like $M \setminus G$,
where $G$ is a geometric realization of a finite abstract simplicial complex embedded in the interior of $M$. 
There would be a Gauss-Bonnet theorem for such sets. 

\paragraph{}
As in general, for independent random variables, we have

\begin{lemma}
The index expectation curvature of $(\Omega \times M)^d$ is the product of the
index expectation curvatures: $K = K_1 \cdots K_d$, where each of the $K_j, j=1, \dots, d$ is a
sectional curvature.
\end{lemma}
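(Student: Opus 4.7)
The plan is to combine the previous lemma with the elementary fact that expectations factor over products of independent random variables. Recall that $f = f_1 + \cdots + f_d$ where each $f_k$ depends only on the pair of normal coordinates $(x_{2k-1}, x_{2k})$ and is drawn from an independent copy of $(\Omega, \mathcal{A}, \mu)$ (with the auxiliary parameters $y_{k\ell}$ distributed independently on $M$). The preceding lemma asserts that at a critical point $x$ of $f$,
\[
  i_f(x) = \prod_{k=1}^d i_{f_k}(x_{2k-1}, x_{2k}),
\]
so the integrand of the expectation is already a product.

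The main step is then to integrate against the product measure $\mu^{\otimes d}$ on $(\Omega \times M)^d$. Since the $f_k$'s depend on disjoint sets of randomness, the integrand factors as a product of functions each depending on a single factor of the product measure, and Fubini gives
\[
  K(x) = \mathrm{E}\!\left[\prod_{k=1}^d i_{f_k}(x_{2k-1}, x_{2k})\right]
       = \prod_{k=1}^d \mathrm{E}\bigl[ i_{f_k}(x_{2k-1}, x_{2k}) \bigr]
       = \prod_{k=1}^d K_k(x).
\]
Finally, to identify each factor $K_k$ with a sectional curvature, one uses the two-dimensional index expectation theorem: the variable $f_k$ is a Morse function on the two-dimensional slice spanned by the normal coordinates $(x_{2k-1}, x_{2k})$ in $T_xM$, and the expectation of its Poincar\'e-Hopf index density at $x$ is (up to the normalization absorbed into $C_d$) the Gauss curvature of that slice, i.e.\ the sectional curvature $K_{2k-1,2k}(x)$ of the coordinate $2$-plane spanned by $e_{2k-1}, e_{2k}$ in the chosen frame.

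The only subtlety is to make sure that the construction really does give independent random variables, which is why the auxiliary points $y_{k\ell}$ were introduced in the preceding paragraph: they guarantee that the values of $f_k$ away from its own pair of coordinates are frozen to independent samples and do not create hidden coupling between the $f_k$'s once one restricts attention to the point $x$. Granting that, the argument is one line; the conceptual content is really in the previous two lemmas (Poincar\'e--Hopf and the factorization of indices), plus the two-dimensional Gauss--Bonnet identification, and no further obstacle arises.
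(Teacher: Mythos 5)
Your proposal is correct and follows essentially the same route as the paper: the paper's own proof is just the one-line observation that independence gives ${\rm E}[XY]={\rm E}[X]{\rm E}[Y]$, relying implicitly on the preceding index-factorization lemma and the two-dimensional index-expectation identification, exactly the ingredients you spell out. Your version simply makes explicit the Fubini step over the product measure and the role of the auxiliary parameters $y_{k\ell}$, which the paper leaves tacit.
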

\begin{proof}
Independent random variables $X,Y$ are decorrelation, which is equivalent to
${\rm E}[X Y] = {\rm E}[X] {\rm E}[Y]$.
\end{proof}

\section{Orthotope Riemannian manifolds}

\paragraph{}
When looking at Morse theory at the boundary of a polyhedron, we will need to understand what
happens if several polyhedra $M_j$ are joined together. The {\bf gluing problem} is to understand the curvature
at such a point, if several polyhedra $M_j$ equipped with different probability spaces $\Omega_j$ leading to different
curvatures on $M_j$ are glued together. We are especially interested in the 
{\bf gluing curvature} at points which after gluing are in the interior of $\bigcup_j M_j$. 

\paragraph{}
It turns out that we do not even have to worry about any other compatibility
of the various probability spaces $\Omega_j$ if an isotropy condition and orthogonality conditions
is satisfied: the interior gluing curvature will disappear. 
For gluing along hypersurfaces without boundary, one can simplify the story in 
that the probability spaces $\Omega_j$ need only to be invariant under the involution $f \to -f$. 
In general more symmetry is needed. Terminology from convex analysis \cite{Rockafellar} is helpful here when introducing 
the new notion called {\bf ``orthotope"}.

\paragraph{}
Orthotope polytopes generalize {\bf hyper rectangles} in Euclidean space. The condition is
that for two vectors appearing in the $1$-skeleton of $M$ are either parallel or perpendicular. 
An {\bf orthotope Riemannian polytop} $M$ is a Riemannian polytop for which orthogonality 
conditions satisfied at the boundary. We express this now in a way which we will actually use. Given a
point $x$ at the boundary of $M$, we can look at the {\bf Fenchel cone} $C$ defined in $T_xM$ which consists
of the linear span of all the vectors in $T_xM$ which point into $M$. The {\bf dual Fenchel cone} $\hat{C}$ of
$C$ is defined as the set $\{ w \in T_xM,  v \cdot w \geq 0$ for all $v \in C \}$. An 
orthotope Riemannian polytop has the property that at every boundary point one has $C(x)=\hat{C}(x)$. 

\paragraph{}
Now, let us call a Riemannian manifold $M$ to be an {\bf orthotope Riemannian manifold} if it can be partitioned
into arbitrarily small orthotope Riemannian polyhedra. 

\begin{figure}[!htpb]
\scalebox{0.5}{\includegraphics{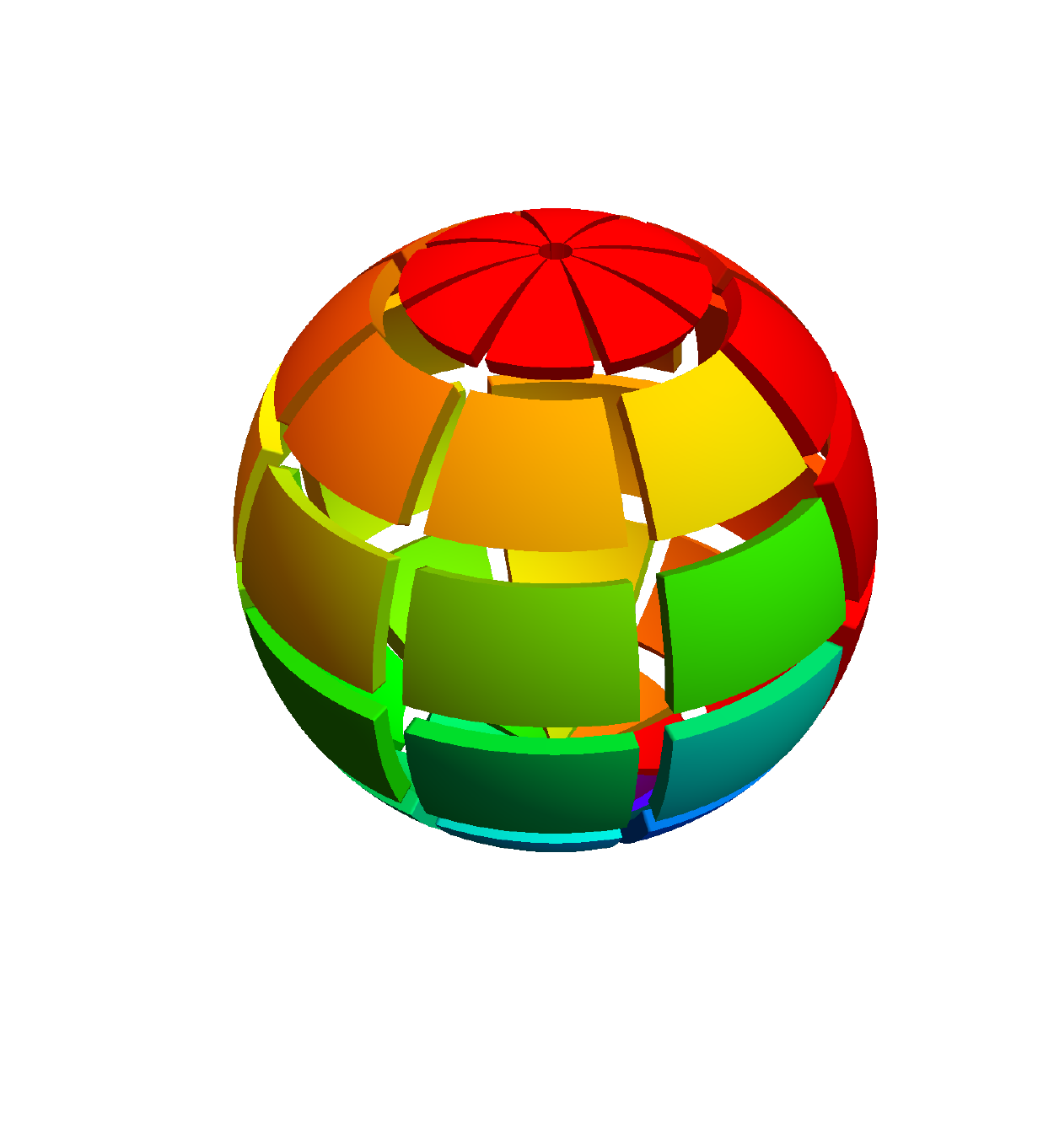}}
\scalebox{0.5}{\includegraphics{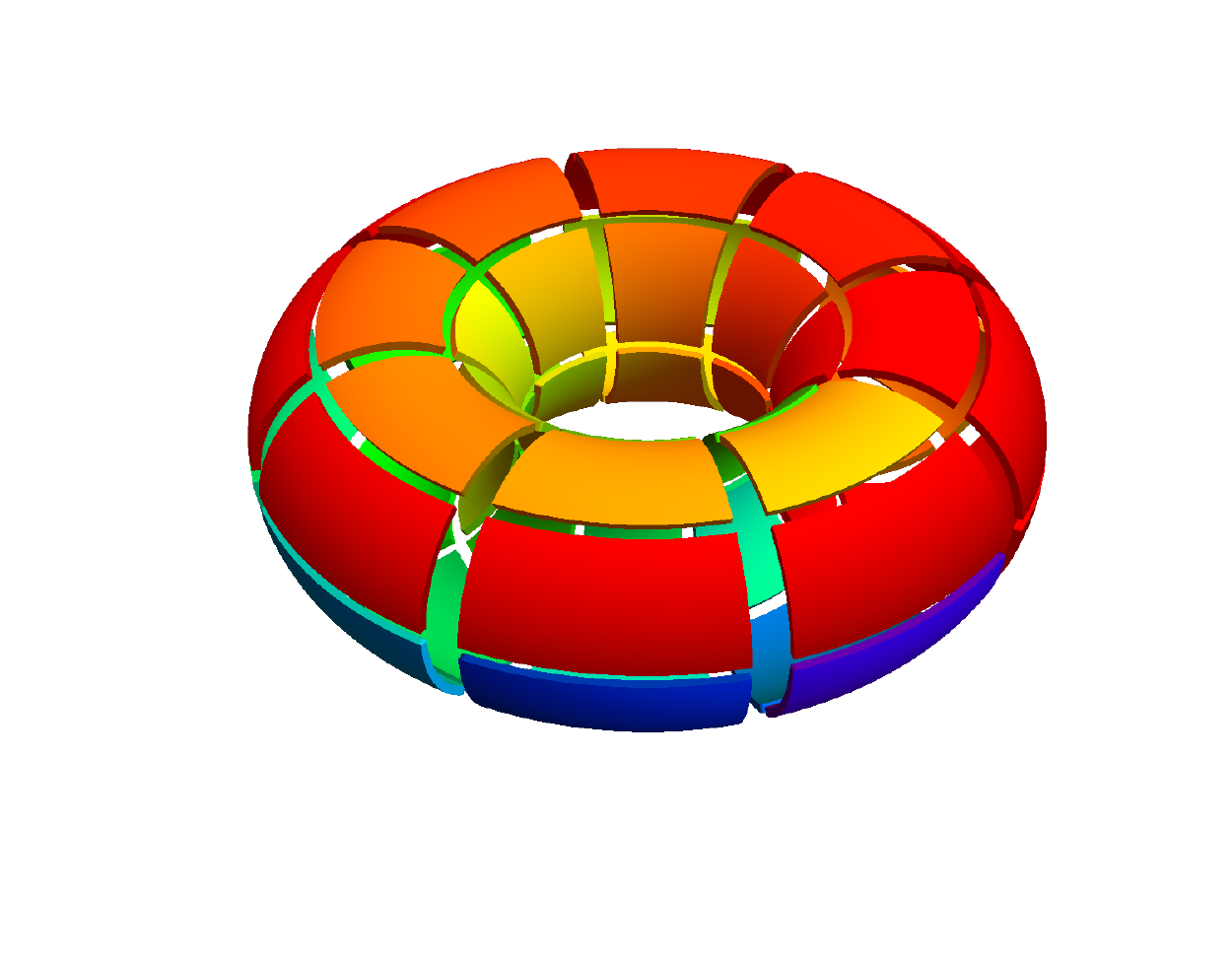}}
\label{orthotope}
\caption{
In any dimension, spheres and tori are orthotope Riemannian manifolds. 
}
\end{figure}

\paragraph{}
For example, the $2$-sphere $\mathbb{S}^2$ can be partitioned into $8$ Riemannian orthotopes which are
all triangles with $90$ degree angles. This generalizes to arbitrary spheres $\mathbb{S}^m$ by 
induction. Cut the sphere $M$ into two balls intersecting in a $d-1$ dimensional sphere $N$. 
Now build the orthogonal grid in $N$ which can also be seen as a graph. Build a suspension of
this and make sure that the new connections are perpendicular at each node. 
In general, we can say that if $M$ is an orthotope Riemannian $2d$-manifold, then 
the suspension of $M$ is orthotop.

\paragraph{}
Why do we need the duality condition?
If $x$ is a critical point of a Morse function $f$ 
in the interior of the $(2d-1)$-dimensional part $M_k$ of the boundary of 
a Riemannian polyhedron $M$, then $\nabla f(x)$ is perpendicular 
to the boundary $M$. The set $S_r(x) \cap M$ is a half sphere
which is a topological ball. So, if $\nabla f$ points outwards, 
then $S_r^f(x) = S_r(x) \cap \{y,  f(y) \leq f(x) \}$ 
is still a half ball and the point has index $i_f(x) = 1-\chi(S_r^f(x)) =0$. 
We see that in the case when a critical point is $(2d-1)$-dimensional, 
then we need the gradient to point {\bf into the dual Fenchel cone} defined at the 
point in order to have an index which has a chance to be non-zero. 

\paragraph{}
What happens if we have a narrow Fenchel cone $C$ is that the dual Fenchel cone $\hat{C}$ is large.
In the limiting case if the Fenchel cone is a half line, the dual Fenchel cone is a hyperplane.
On the other hand, for a wide Fenchel cone $C$, the dual Fenchel cone $\hat{C}$ is narrow. 
Again, in a limiting case, if the $C$ is a half space, then $\hat{C}$
a half line. What happens at a point $x$, where several orthotope $M_j$ meet 
is that a function $f$ can only have a non-zero index in one of the $M_j$. 
In general, there can be over-counting and since the curvature is positive at
the boundary of small Riemannian polyhedra $M_j$ (it is the same curvature we
have for $K_{GBC}$ when considered on polytopes), there can only be over-counting of 
curvature and not under-counting. This leads to the inequality $\gamma(M) \geq \chi(M)$. 

\paragraph{}
{\bf Example:} For a polyhedral convex set $M$, an intersection of 
finitely many closed half spaces in $\mathbb{R}^{2d}$,
then if $x \in M$, for sufficiently small $r$, the set $B_r(x) \cap M$ 
is either a ball or then the intersection of a ball with a {\bf convex cone}, 
(a closed subset which when $x$ is translated to the origin is closed under
addition and scalar multiplication). 
One reason why it is nice to take Morse functions $f$ on $E$ 
which are affine is that they are {\bf convex functions} in the sense 
that the epigraph $\{ (x,c) \in E \times \mathbb{R}, x \in E, c \geq f(x) \}$
is a convex set. It turns out that given convex functions $f_1(x,y),f_2(x,y)$ 
also newly built functions like 
$f(x,y) = f_1(x,v) + f_2(u,y)$ parametrized by $u,v$ are convex functions. 
The new probability space of functions we are going to construct 
consist of such functions. 

\paragraph{}
Let us look at the case when $M$ is a polyhedron in 
$E=\mathbb{R}^2$ and $f: \mathbb{R}^2 \to \mathbb{R}$
is a linear function $f(x) = a \cdot x$ with a vector $|a|=1$. 
For almost all $a$, such a function $f_a$ is Morse and the critical 
points are on vertices of the polyhedron. 

\begin{figure}[!htpb]
\scalebox{0.3}{\includegraphics{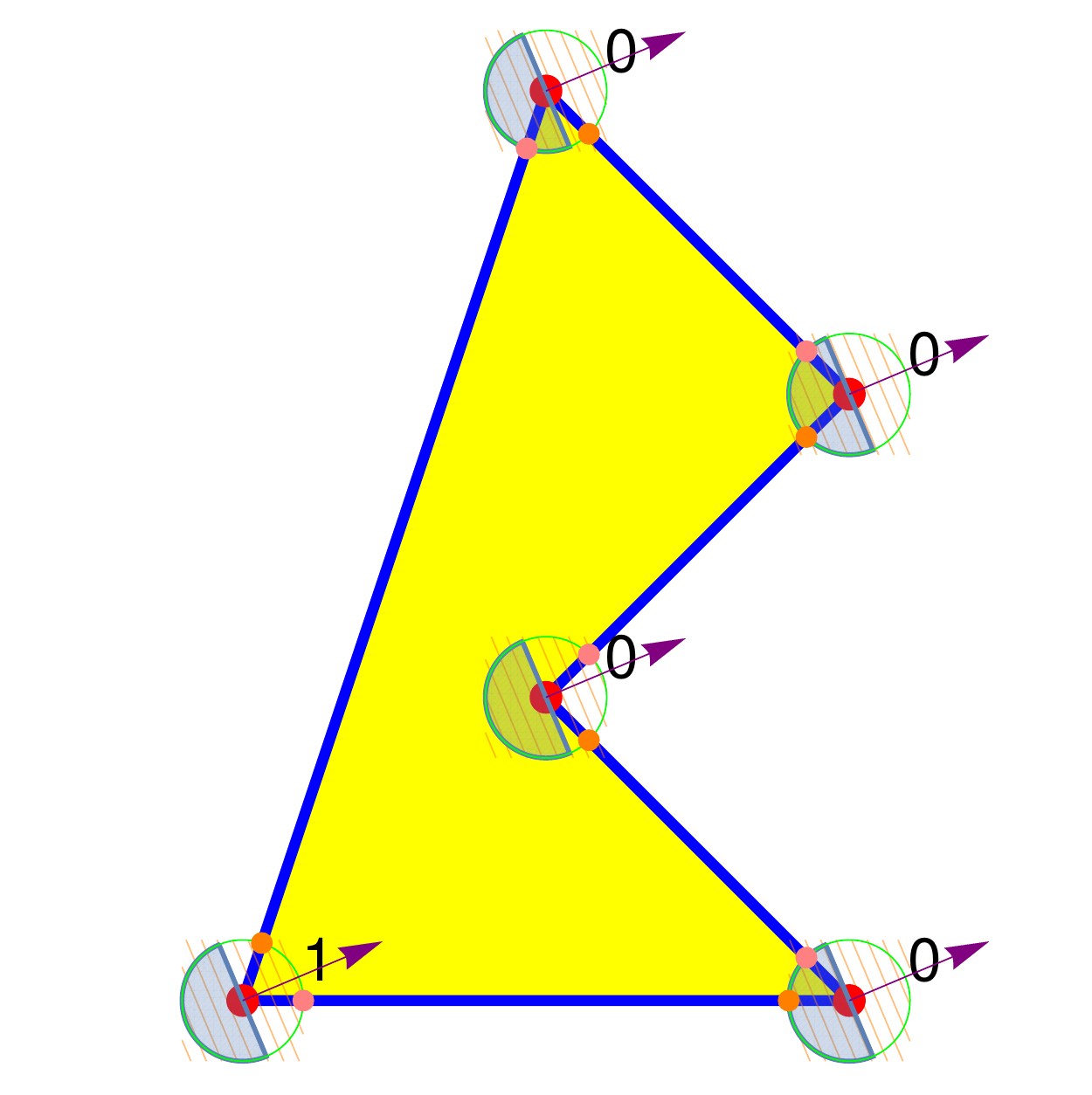}}
\scalebox{0.3}{\includegraphics{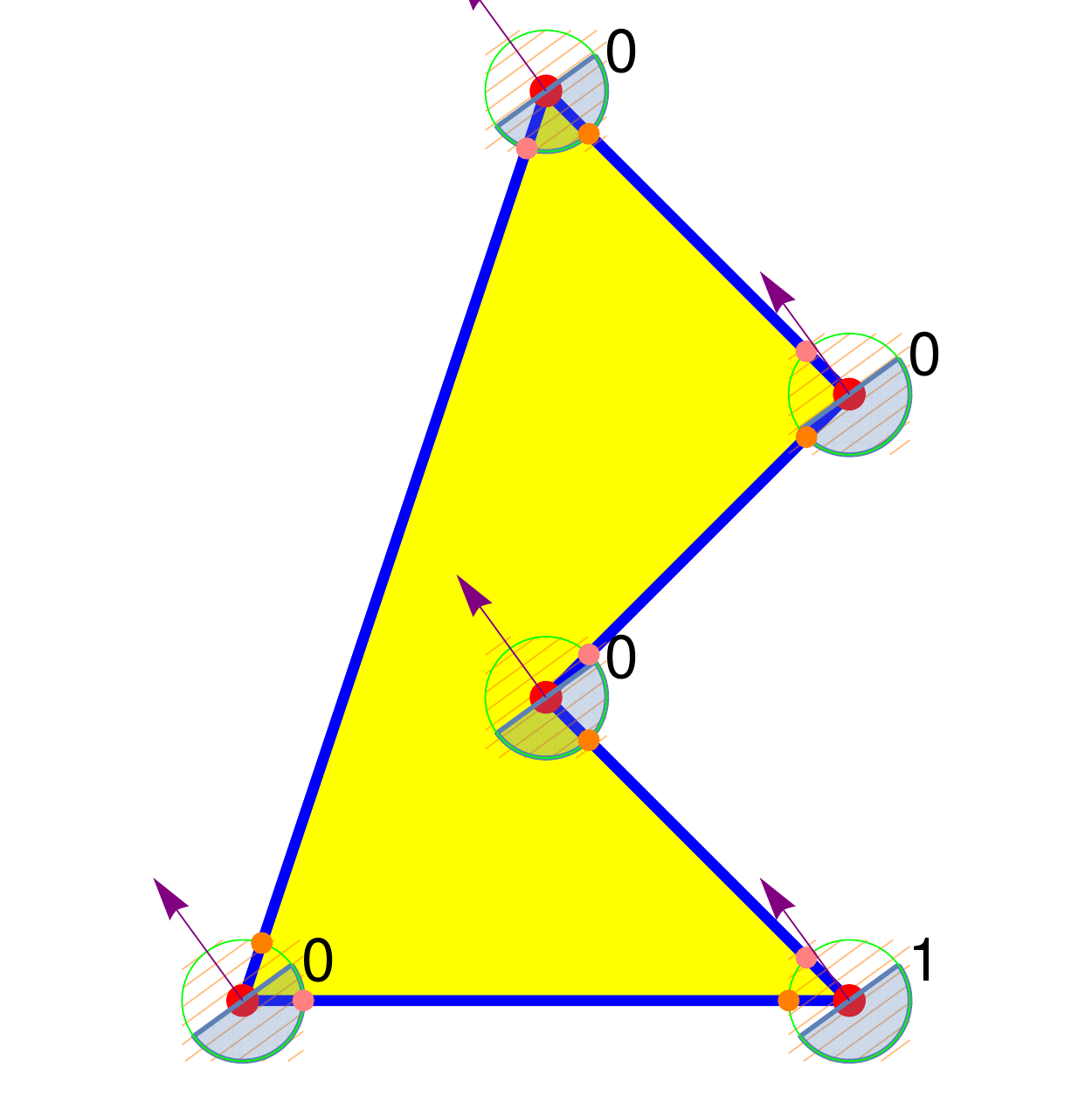}}
\scalebox{0.3}{\includegraphics{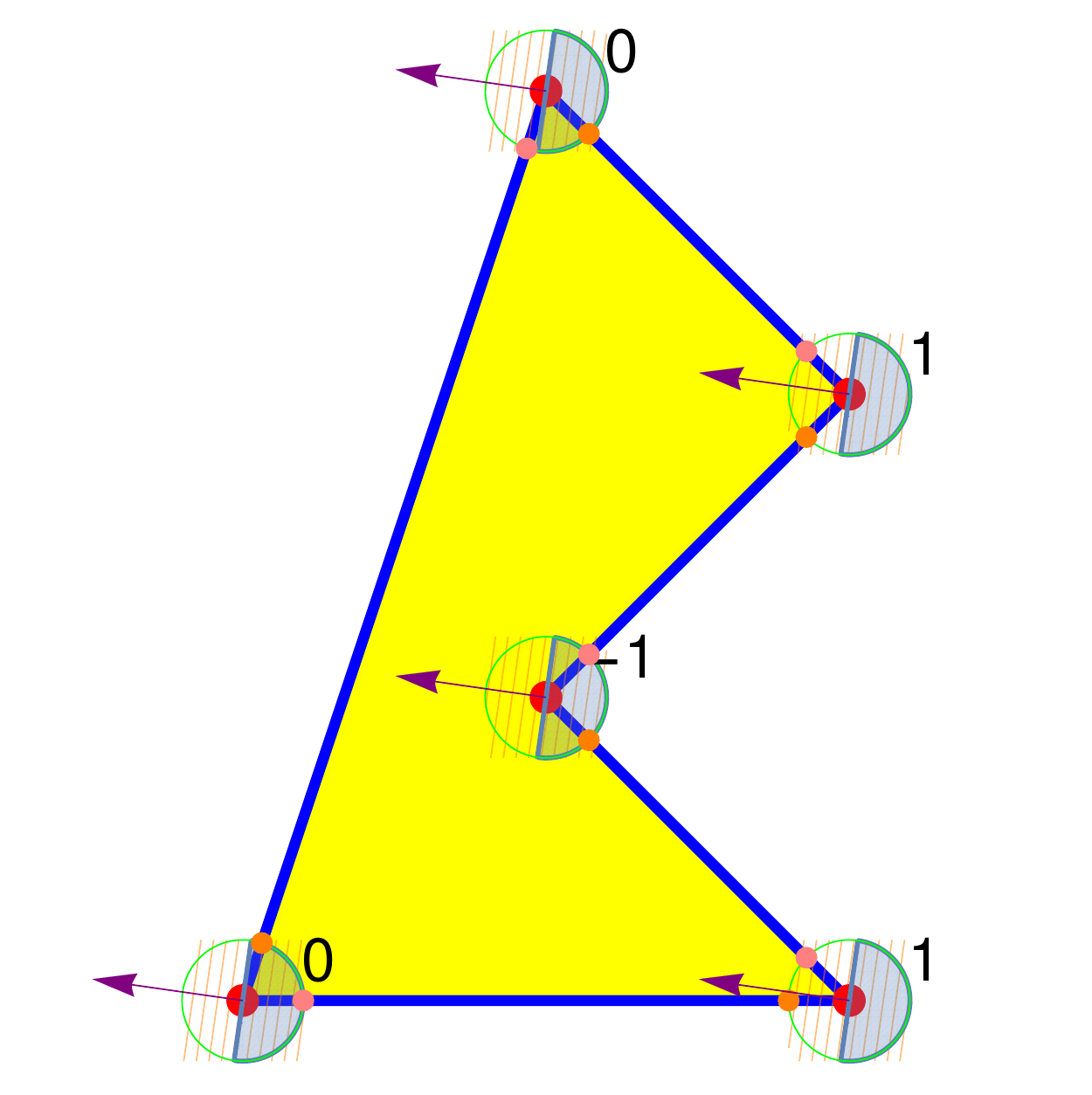}}
\label{polygon}
\caption{
Three cases of a linear Morse function $f_a$ on a polyhedron $M$. 
The index $i_f(x)=1-\chi(S_f(x))$ 
can only be non-zero if $\nabla f$ points into the dual
cone of $B_r(x) \cap M$. Poincar\'e-Hopf assures 
$\sum_x i_f(x)=1= \chi(M)$. When averaging over all $a$, we
get the Hopf-Umlaufsatz with curvature supported on finitely many 
points. In this case, there is one point with negative curvature. 
The isotropy condition of probability spaces we will impose implies
that the curvature at a vertex will be a solid angle curvature, the
same which appears for Gauss-Bonnet-Chern. This is the reason why
our new curvature $K$ survives the {\bf Allendoerfer-Weil gluing} in the same
way than $K_{GBC}$. 
}
\end{figure}

\section{Gluing orthotope polyhedra}

\paragraph{}
Assume, the $2d$-manifold $M$ is Nash embedded in an ambient Euclidean space $E$. Assume that
$M$ is divided into two parts $M^{\pm}$ and that we have two different probability spaces 
$\Omega^+$ and $\Omega^-$ of Morse functions $M^{\pm}$. We assume that the probability spaces are
invariant under rotations $f \to f(A)$ for $A \in SO(E)$ so that $f \to A(f)$ are 
measure preserving involutions both on $\Omega^+$ and $\Omega^-$. We call this a {\bf isotropy condition}. 
Let $M^0$ be the intersection $M^+$ with $M^-$. Applying the Gauss-Bonnet result on both sides
gives curvatures $K^{\pm}$ in the interior of $M^{\pm}$, as well as boundary curvatures
$\kappa^{\pm}$ restricted to $M^0$. Any convex combination $p_1 \kappa^+ + p_2 \kappa^-$ 
with $p_1+p_2=1$ produces now a curvature $\kappa^0$ on $M_0$ which integrates up to $\chi(M^0)$.
Any critical point on $M^0$ comes from a critical point of one of the two sides. 
Every function on $M^+$ and $M^-$ produces a Morse function on $M_0$.
Index expectation there gives $\chi(M_0)$. 

\paragraph{}
The valuation property of Euler characteristic means $\chi(M) = \chi(M_1) + \chi(M_2) - \chi(M_0)$.
It follows that we can forget the measures $\kappa^+, \kappa^-$ as well as the interior of $M_0$ 
when gluing two manifolds $M^+$ and $M^-$ together. This is the simplest case where we glue along
a $2d-1$ dimensional simplicial complex. If we glue such that a $0$-dimensional part of $M$ gets
berried inside $M$, then we have to make sure that the sum of the probabilities adds up to $1$ there
so that this gluing measure can be removed. 

\paragraph{}
If the gradient of a Morse function $f$ at a boundary point $x$ of dimension $(2d-1)$ of a 
$2d$-dimensional Riemannian polyhedron $M$ points outwards, then $i_f(x)=0$. 
Proof: Technically, this follows from the fact that the dual cone of the part on one side of the 
hyperplane $\Sigma$ agrees with the cone itself. We use the fact that for every boundary point $x$ of the polyhedron $M$, 
the sub-manifold $S_r(x)$ is a topological ball. No, if $\nabla f$ points outwards,
then $S_f^-(x) = S_r(x)$ and because $\chi(S_r(x))=1$, we have $i_f(x) = 1-\chi(S_r(x)) = 1-1=0$. 

\begin{figure}[!htpb]
\scalebox{0.5}{\includegraphics{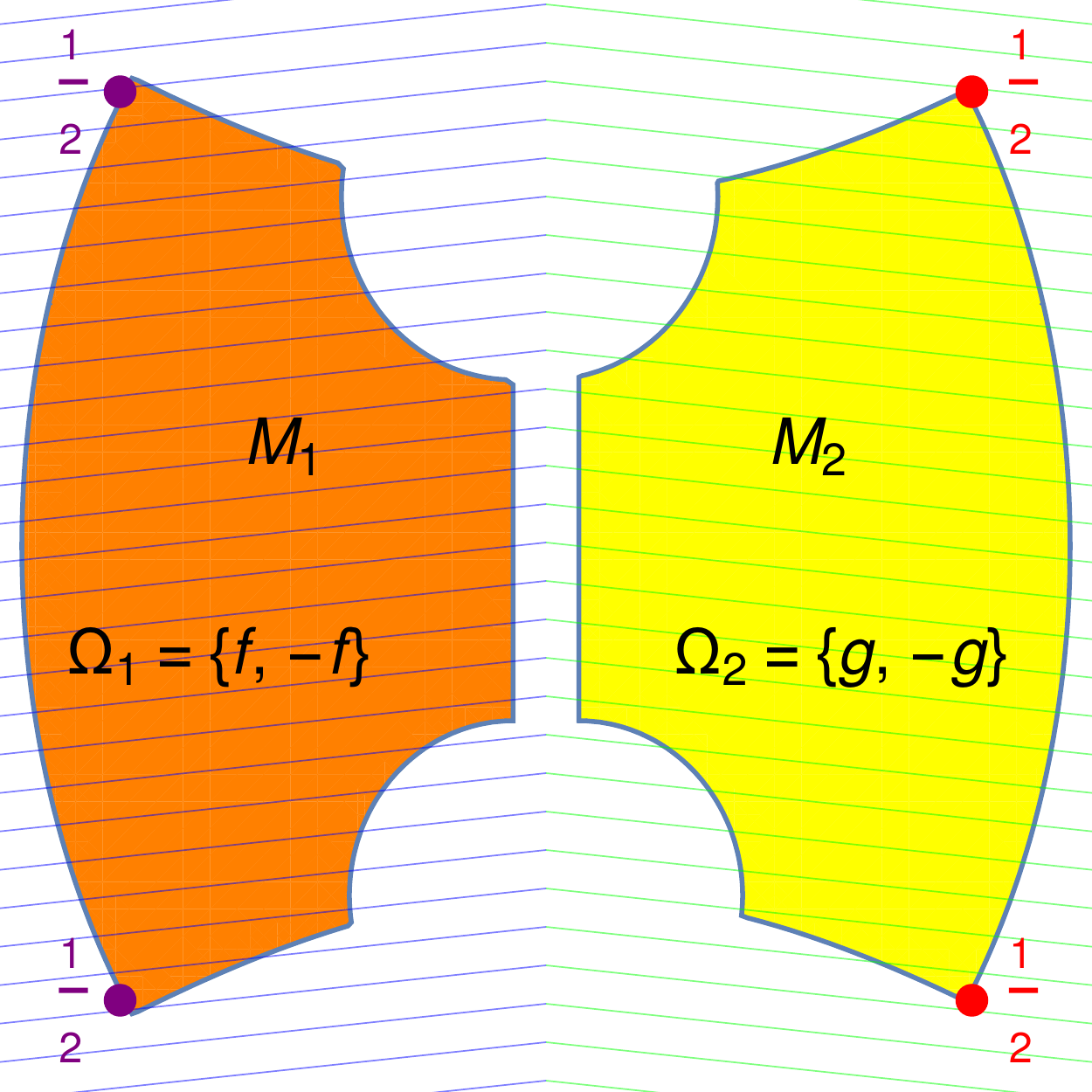}}
\scalebox{0.5}{\includegraphics{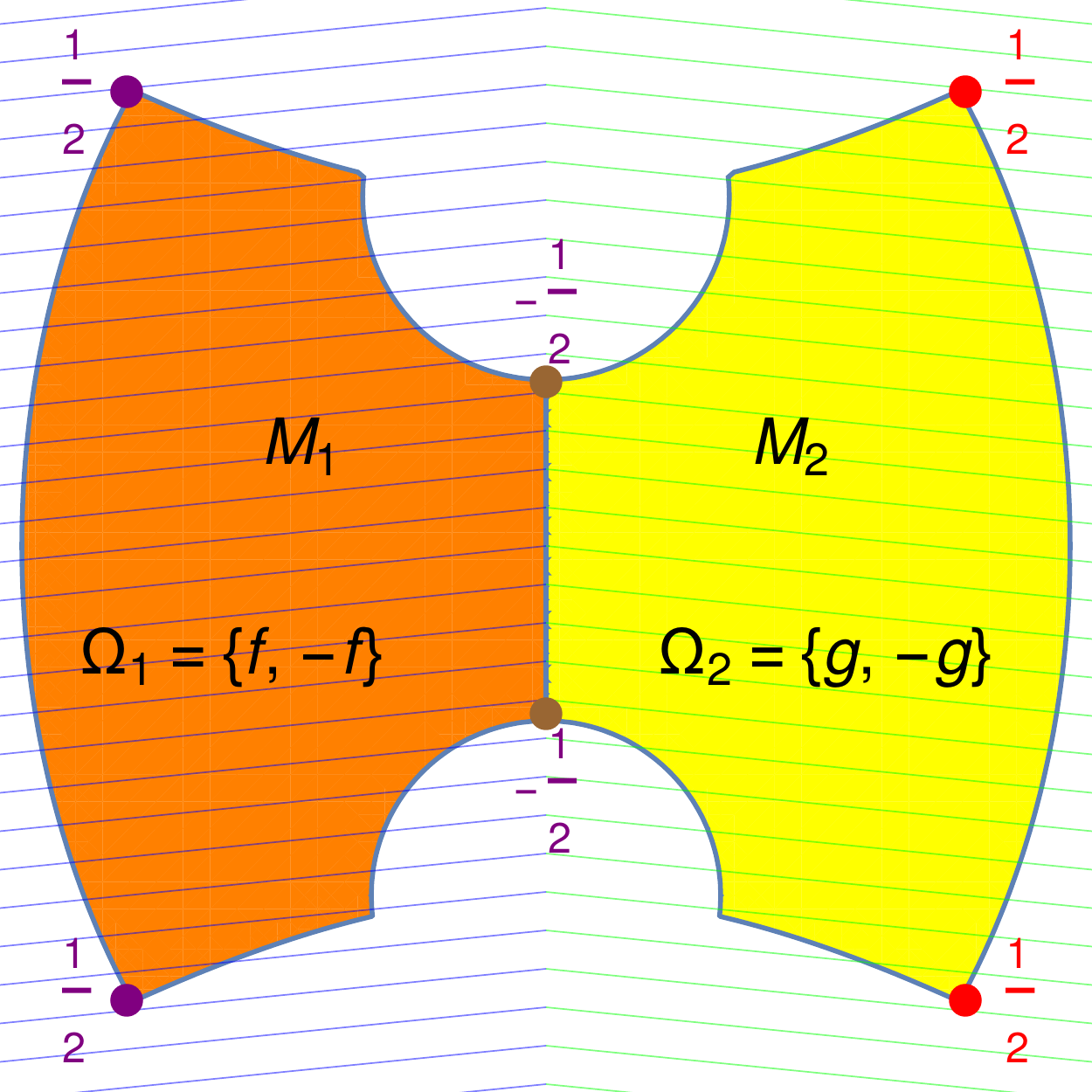}}
\label{orthotope2}
\caption{
Gluing orthotope manifolds. We see an example where $(M_1,\Omega_1=\{f,-f\})$ 
and $(M_2,\Omega_2=\{g,-g\})$ have no curvature on the intersection $M_1 \cap M_2$ but where 
the induced probability space $\Omega_0$ on the glued orthotope 
$(M_0,\Omega_0)=(M_1 \cap M_2,\{ f,-f,g,-g\})$ has curvature. The negative of this curvature is
what is present on $M_0 \subset M$. The total curvature of $M$ is still $1 = \chi(M)$. 
It is important to have orthotope conditions. It is also important to have a homogeneity 
condition in general. 
}
\end{figure}

\paragraph{}
So, the only critical points which matter are the critical points $x$, where $\nabla f(x)$
points {\bf into} the dual cone of the polyhedron which is in the orthotope case into the
polyhedron. This is where the orthotope condition comes in.
Note that this only holds under the milder parity condition of symmetry $f \to -f$ 
if the point $x$ is in the interior of a $(2d-1)$-dimensional boundary face of the 
Riemannian polyhedron $M$. If $x$ is in a smaller-dimensional points, the dual cone is in general different
from the cone and it is there, were the orthotope condition is needed. 
For a triangle $M$ for example (a smooth image $r(\Delta_2)$ of a 2-simplex), 
the dual cone of a vertex (a $0$-dimensional point in the Riemannian 
polyhedron $M$), only agrees with the cone if the angle at $x$ is equal to $\pi/2$, 
meaning that the triangle is orthotope. An example of an orthotope triangle 
is the $90-90-90$ triangle on the sphere. 

\paragraph{}
If a Riemannian polyhedron $M$ is a union of two polyhedra $M_k$ with smooth $(2d-1)$-dimensional boundary
and each is equipped with a Morse functions $f_k,-f_k$, then one of them $f_k$ induces a Morse function 
on $M_0$ with total index sum $\chi(M_0)$. The probability space $\Omega=\{ f_1,-f_1,f_2,-f2 \}$ 
equipped with any measure already produces a probability space of Morse functions
on $M_0$. The index expectation is $\chi(M_0)$. Removing all these critical points reduces the Euler characteristic 
to $\chi(M) = \chi(M_1) + \chi(M_2) - \chi(M_0)$. This gluing process allows to decouple the probability space 
on one part of the manifold from an other part. But this has only worked if we glue along one simplex. When building
up $M$, we have also to glue in pieces which close up some interior point, meaning that we have to glue along
smaller dimensional parts. 

\paragraph{}
Here is a gluing procedure we explored earlier
Assume $M$ is manifold which is the union of two orthotope manifolds $M_j$ and assume that
$M_0=M_1 \cap M_2$ is orthotop. If $(\Omega_i,\mathcal{A}_i,\mu_i)$ are 
any probability spaces of Morse functions on $M_j$ such that the index expectation curvature
is isotrophic at every vertex point in $M_k$, then 
$\gamma(M)=\gamma(M_1) + \gamma(M_2) - \chi(M_0)$. 
The reason is that for each critical point of $f_j$, exactly one of the Morse functions $f_j$ on $M_j$ contributes to a
critical point in $M_0$.  Due to the orthotope condition, there is no overlap of probability spaces and the two 
spaces induce a probability space $(\Omega_0,\mathcal{A}_0,\mu_0)$ in the intersection. 

\paragraph{}
The gluing works almost unconditionally 
if the intersection $M_0$ is a manifold without boundary. In that
case we only need that $(\Omega_i,\mathcal{A}_i,\mu_i)$ admit a measure preserving parity action $f \to -f$ 
for $i=1,2$. Now, we can produce a probability space $(\Omega_0,\mathcal{A}_0,\mu_0)$  of Morse
functions on $M_0$ by taking the push forward measures $\mu_{i,0}$ of the restriction map $M_i \to M_0$
and define $\mu_0 = 1/2(\mu_1 + \mu_2)$ which by symmetry $f \to -f$ is a probability measure. 
The orthotope condition is automatically satisfied at a point of $M_0$ also if $M_i$ are manifolds with 
boundary and $M_1 \cup M_2$ is a manifold.

\obs{
If $M=M_1 \cup M_2$ is a Riemannian manifold divided up along a manifold $M_0=M_1 \cap M_2$ and
$(\Omega_i,\mathcal{A}_i,\mu_i)$ is an arbitrary choice of parity symmetric probability spaces 
of Morse functions on $M_i$, then $\chi(M_1) + \chi(M_2) - \chi(M_0) = \chi(M_1 \cup M_2) = \int_M K(x) \; dV(x)$ where
$K(x)$ is the curvature $K_i$ in the interior of each $M_i$. No residue glue curvature is present
in $M_1 \cap M_2$.
}

\begin{figure}[!htpb]
\scalebox{0.4}{\includegraphics{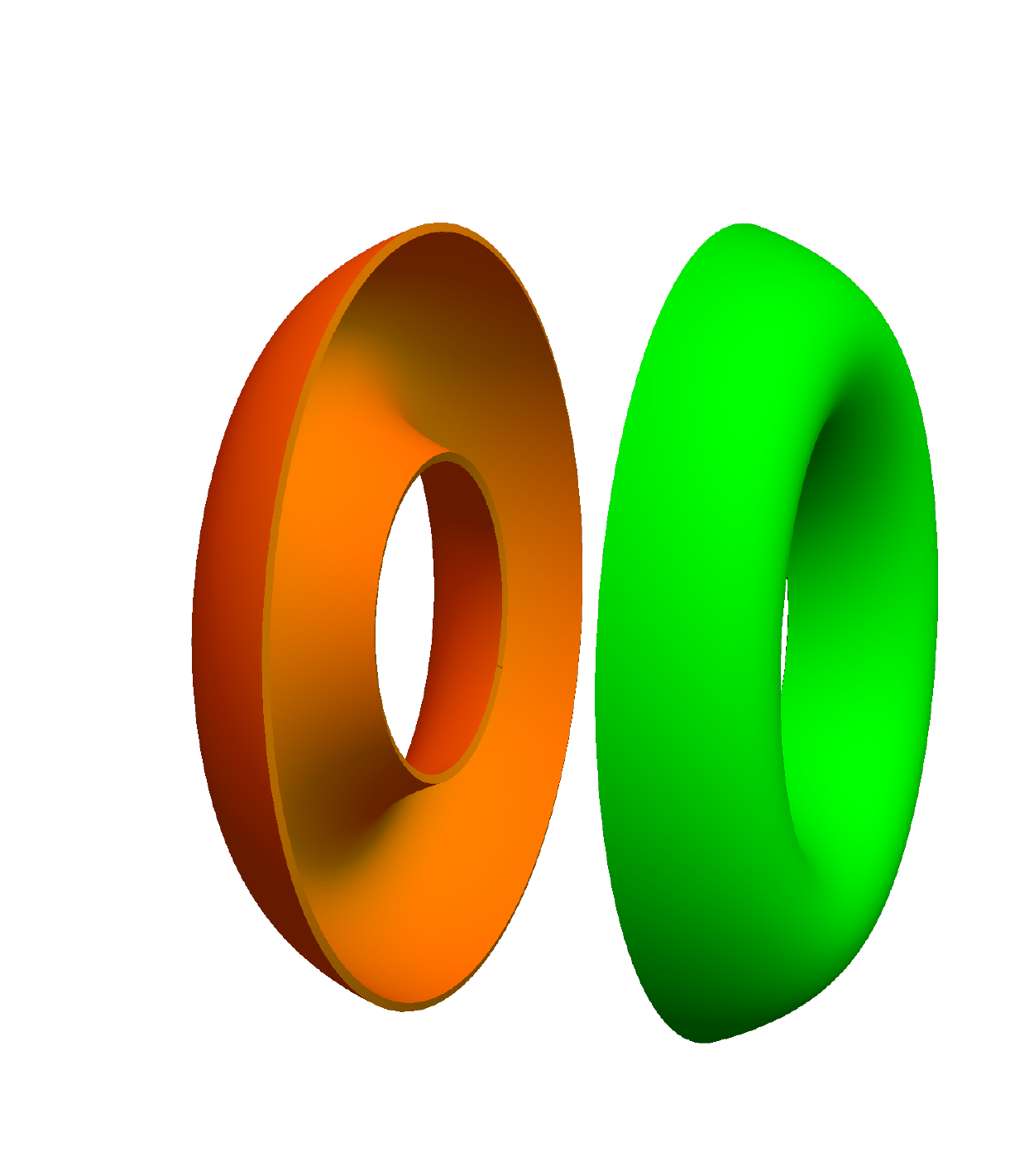}}
\label{toruscut}
\caption{
Cutting a manifold along a manifold allows to put completely different
probability measures of Morse functions on each part as long as the 
probability measures satisfy some parity condition. Gauss-Bonnet still
holds. It is when cutting things up further that we have to have some
isotropy and orthogonality condition to make things compatible at the
corners of the polyhedra. In the current situation, the curvature which 
is present on each boundary side after cutting disappears after gluing.
There is no residue glue. 
}
\end{figure}

\paragraph{}
The gluing also works nicely if the probability spaces $\Omega_i$ come from a global isotropic
probability space $\Omega$ like all linear functions in an ambient Euclidean space. In that case
the induced curvature is the Gauss-Bonnet-Chern integrand and the gluing
$\chi(M_1) + \chi(M_2) - \chi(M_0) = \chi(M_1 \cup M_2)$ still works well. This is the Gauss-Bonnet-Chern
theorem. But as Goroch has shown for positive curvature manifolds $K_{GBC}$ can become negative
at some points in dimension $6$ or higher. 

\paragraph{}
We originally thought that we can force the isotropic condition to hold everywhere.
But averaging over all frames at one point in $M_k$. While this makes the
curvature $K(x)$ isotropic at this point, it does not make it isotropic on all of the polytop $M_k$.
When the chamber $M_k$ gets small, also the error gets small and goes to zero if the diameter
of $M_k$ goes to zero, what happens is that we have also more and more boundary appearing. 
The discrepancy of isotropy everywhere is not small enough to go to zero, when adding things up. 
The above gluing procedure however works if we have a curvature which is isotropic
everywhere. This is where Weyl's invariance theory \cite{Weyl1946,Bott75} comes in and where 
Singer has conjectured that such a $K$ must be $K_{GBC}$ up to some divergence. 
And this is what {\bf Gilkey} proved in 1975 \cite{Gilkey1975}.
The curvature $K(x)$ integrating up to $\gamma$ only is {\bf coordinate independent} at every
point, but it is not {\bf metric independent}. This manifests here in {\bf gluing curvature}
at the boundaries of $M_k$ which are metric independent. In physics one would associate this
with mass or energy transfer between the different cells $M_k$. The analogy is because mass 
influences the metric.

\begin{figure}[!htpb]
\scalebox{0.3}{\includegraphics{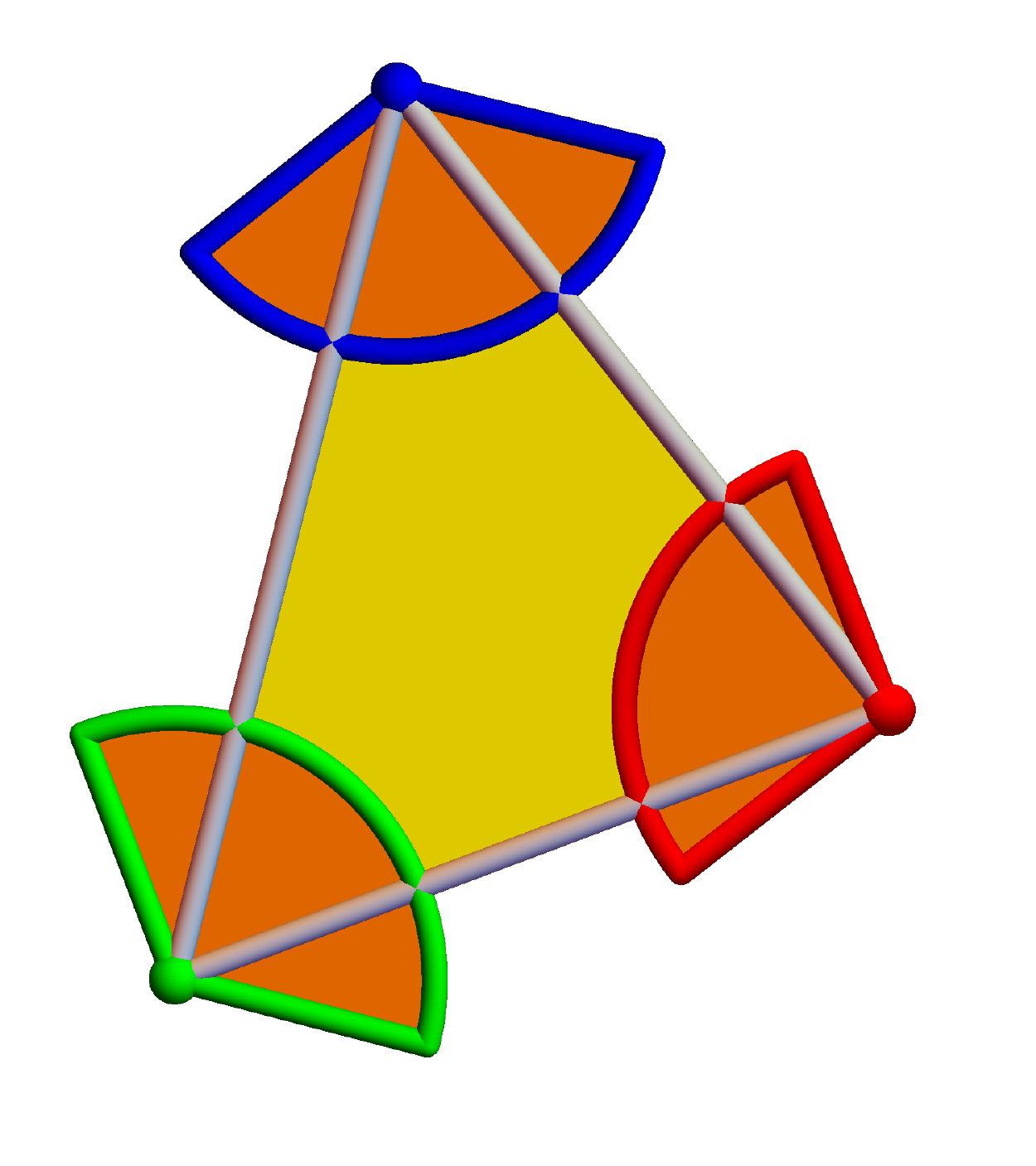}}
\label{cone}
\caption{
For a polygon, the curvature $K(x)$ a vertex is the angle of the dual 
Fenchel cone at the point $x$. The sum of the curvatures is $2\pi$. 
For a triangle with angles $\alpha,\beta,\gamma$ adding up 
to $\alpha+\beta+\gamma=\pi$, this is 
$\alpha'=\pi-\alpha, \beta'=\pi-\beta,\gamma'=\pi-\gamma$.
Gauss-Bonnet assures $\alpha'+\beta'+\gamma'=3\pi-\pi=2\pi$. 
}
\end{figure}

\begin{figure}[!htpb]
\scalebox{0.2}{\includegraphics{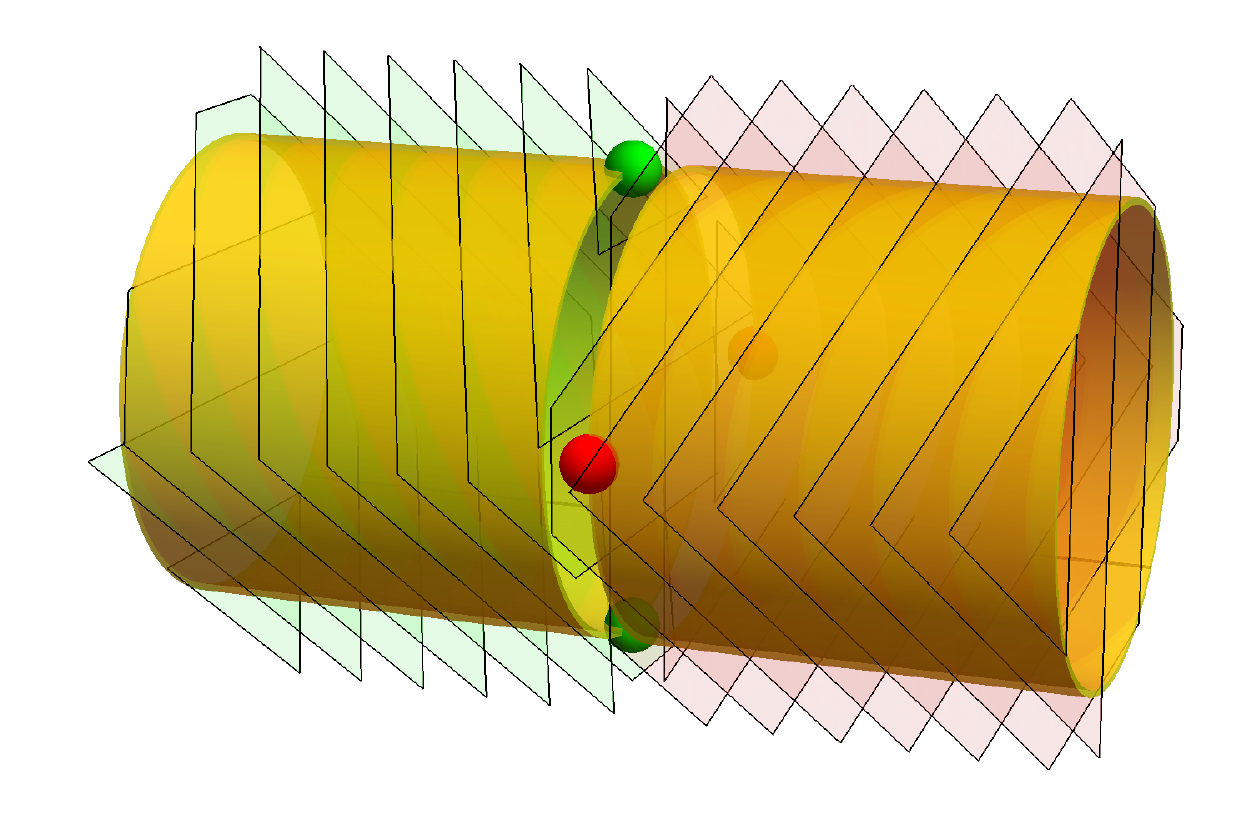}}
\scalebox{0.2}{\includegraphics{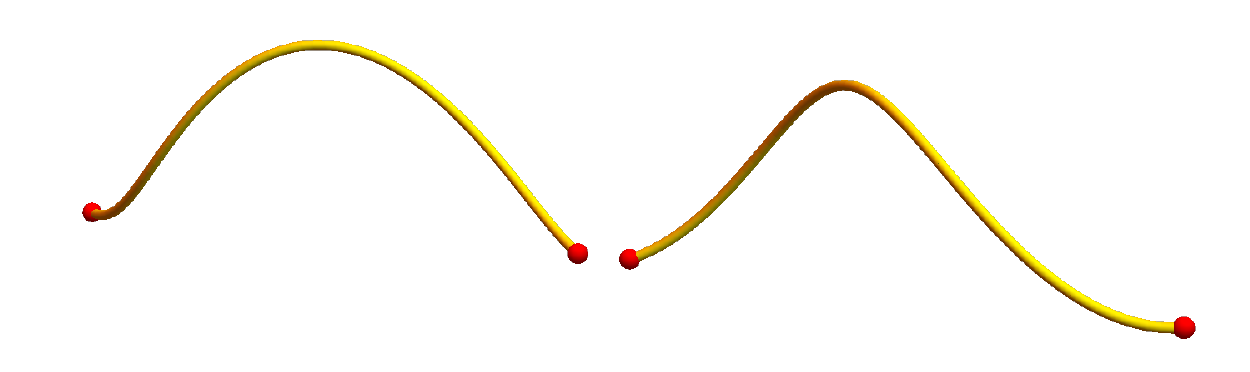}}
\label{glueing1}
\caption{
A situation where two orthotope manifolds $M_1,M_2$ are glued together. We can have different
probability spaces on $M_i$. Poincar\'e-Hopf and Gauss-Bonnet even works
for odd-dimensional manifolds, in the later case, the curvature is located
on the boundary. If isotropy holds at each end, then the
curvature of a line segment is $1/2$ on each end. When gluing them, the
interior curvature disappears and again just two end curvatures of value $1/2$ remain. 
}
\end{figure}

\begin{figure}[!htpb]
\scalebox{0.2}{\includegraphics{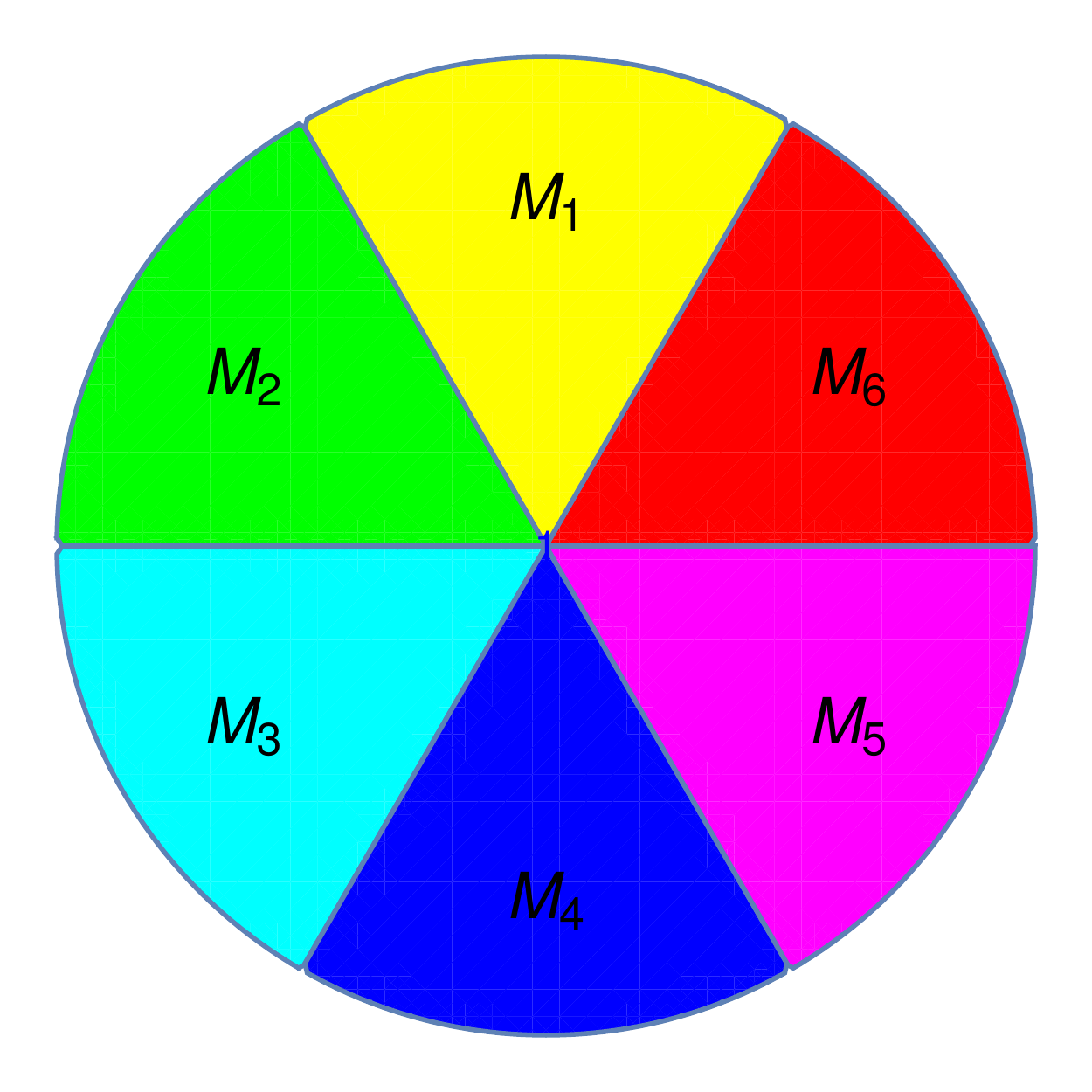}}
\scalebox{0.2}{\includegraphics{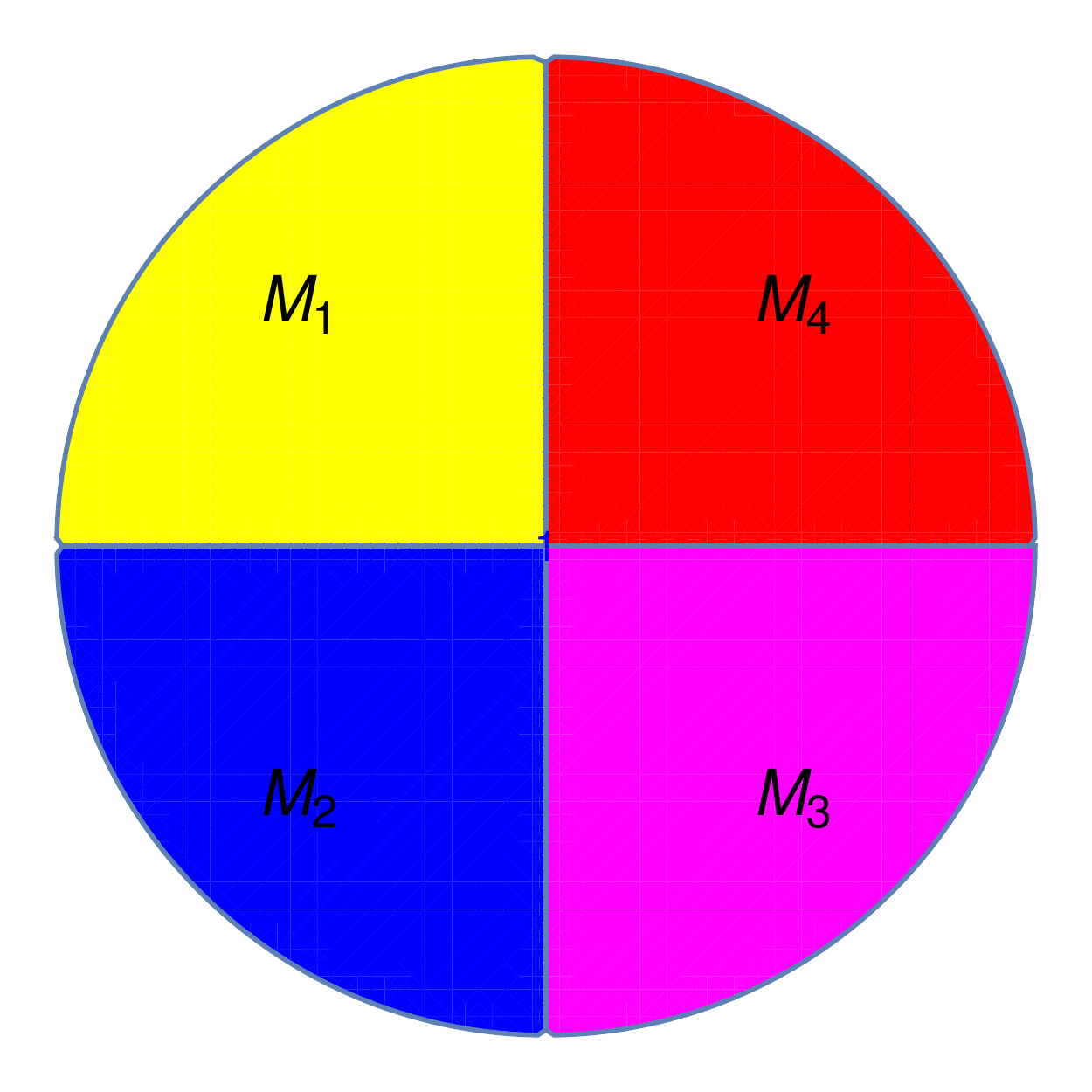}}
\label{glueing2}
\caption{
For a non-orthotope decomposition we can have probability spaces
on each part such that some positive curvature remains at the center.
This is the mechanism which can produce $\gamma_d(M) \neq \chi(M)$ because
some ``glue curvature" can remain in the interior. Also in the orthotope
partition case (displayed to the right), some gluing curvature
can remain in the interior gluing points if we lack isotropy at the 
boundary. 
}
\end{figure}

\section{Remarks and questions}

\paragraph{}
The most obvious question is to learn more about the nature of $\delta_d(M) = \gamma_d(M)-\chi(M)$
or $\delta(M)=\gamma(M)-\chi(M)$. 
As $\delta$ depends on the metric and frame on $M$, one can ask which metrics and frames
extremize $\gamma$. This question was interesting in the case of the
Hilbert action (which is metric dependent but not frame dependent). The simplest case is for $4$-manifolds, where
$$ K_d(x) = (32 \pi^2)^{-1} \sum_\sigma K_{\sigma(1),\sigma(2)} K_{\sigma(3),\sigma(4)}  \;   $$
is a sum of $4!=24$ permutations $\sigma$. One would like of course to have a
local condition which when satisfies makes $\int_M K(x) \; dV(x)$ locally maximal or locally minimal. 
The torus example metrics on $(\mathbb{T}^4,g)$ given in the example shows that one can not expect
to have global maxima or minima of $\gamma$ in general. 
Which manifolds satisfy $\gamma(M,g)=\chi(M)$ for some choice of frame? 
We do not even know yet a metric on $SU(3)$ and frame for which 
$\gamma(M,g) = \chi(M)=0$. Computing $\gamma_d(M)$ is hard in general if there are no large
symmetry groups on $M$. For $\gamma(M)$ it is even harder. 

\paragraph{}
As for negative manifolds, where the Cartan-Hadamard theorem holds, it would be interesting
to know whether $\gamma(M)$ or $\gamma_d(M)$ can be computed as a limit on the universal cover. 
While one has $\gamma(M)(-1)^d >0$, the Hopf conjecture states that $\chi(M) (-1)^d >0$
in that case. In light of that, it would be interesting to know of examples of negative curvature
manifolds, where one can compute $\gamma_d(M)$ of $\gamma(M)$ explicitly. 

\paragraph{}
We first thought to associate $\gamma$ with some kind of 
{\bf Dehn invariant} as it appeared that the property of being able to cut $M$ 
into orthotope polyhedra seem to matter for whether $\gamma_d=\chi$.  But that is not the case.
The story of Dehn is that for polyhedra in $\mathbb{R}^3$ a quantity like $\sum_k l_k \theta_k$ with length
$l_k$ and dihedral angle $\theta_k$ serves as a discrete analog of mean curvature.
As we thought of $\gamma(M)-\chi(M)$ to be associated on how $M$ can be partitioned into polyhedra $M_j$,
the analogy popped up. This analogy has faded, as there are examples like $\mathbb{T}^4$ which can perfectly 
well be chopped into orthotope polytopes, and on which metrics $g$ exist for which $\gamma(M)-\chi(M)$ can
take any given real value. 

\paragraph{}
The Dehn association also appeared appealing because of dimension.
Hilbert's third problem asked in 1900 whether two polyhedra of equal volume can be cut into polyhedral pieces
in such a way that one reassemble the second from the first. In dimension $2$, 
such a decomposition was possible, while in the three dimensional case it was no
more possible as Max Dehn, a student of Hilbert, proved first. Still, a Riemannian manifold analogue is
the question whether it is possible to cut two Riemannian manifolds into bricks with orthogonal boundary
parts. If different metrics are given, the question is to 
produce diffeomorphisms between the bricks which preserve the orthogonality condition at the 
boundary. Actually, at the moment, we do not know yet of any Riemannian manifold, where the orthotope
condition fails. All $2$-manifolds are orthotope, $3$-manifolds allow a triangulation leading to a Heegard splitting along 
an orientable genus g-surface. This suggests that any 3-manifold is orthotope but we do also not know 
this yet. 

\paragraph{}
The functional $\gamma_d$ or $\gamma$  is at first only defined for even dimensional manifolds. It can be extended to odd-dimensional
$(2d+1)$-manifolds by using the same formula and summing just over all groups of $d$ perpendicular
$2$-planes which can be built in $M$. The constant could be adapted in the odd-dimensional case too. One
suggestion is to take the constant so that the spheres $S^{2d+1}$ have $\gamma(M)=2$ which is larger 
than $\chi(M)=0$. In the case $d=3$, the function is proportional to scalar curvature because it averages all sectional
curvatures at a point. 

\paragraph{}
The curvature $K(x)$ was motivated by the Gauss-Bonnet theorem for {\bf Riemannian polyhedra} involving
index expectation using product probability spaces produced in a particular frame $t \in O(M)$ and
constructing positive curvature $K(x,t)$ on the orthonormal frame bundle $O(M)$
of $M$. It is the product of the sectional curvatures of $d$ pairwise perpendicular $2$-planes
$t_1,t_2, \dots, t_d$ obtained from the orthonormal frame $t=(t_1, \dots, t_d)$ in the tangent space
$T_xM$. The curvature $K(x)$ is then an average of $K(x,t)$ over all possible
orthogonal coordinate systems $t$ in $T_xM$ using the Haar measure. The curvature $K(x)$ differs
in general from $K_{GBC}(x)$ which does not have to be positive in the positive curvature case
\cite{Geroch,Klembeck}.  The curvature $K_{GGC}(x)$ is also an expectation of indices 
$i_f(x) = i_{t_1,f} \cdots i_{t_d,f}$ but where the same Morse function $f$ is used simultaneously 
for all $d$ two-dimensional planes. That expectation is $t$-independent if the probability 
space of the Morse functions $f$ is rotational and translational invariant in an ambient Euclidean space.

\paragraph{}
The classical Gauss-Bonnet-Chern curvature $K_{GBC}$ can be interpreted integral geometrically if 
we see $(-1)^{\sigma} R_{\sigma_i,\sigma_j,\sigma_k,\sigma_l}$ 
as the index expectation of functions $f(e_{\sigma_i},e_{\sigma_j})$ restricted to the plane spanned by 
$e_{\sigma_k}$ and $e_{\sigma_l}$. Now, a function $f$ in $E$ can be written as $f(x_1,\cdots, x_{2d})$ 
which is a combination of functions only depending on $_i,x_j$. 
And a similar argument than the lemma in the introduction shows that the integral expectation over
all functions can be written as an average over functions on coordinate planes. 

\paragraph{}
Integral geometry sees the entry $R_{ijkl}$ of the curvature tensor
is a {\bf conditional expectation} of Morse functions restricted to the $ij$ slice, where
the expectation is taken over Morse functions when integrated over all except the $k,l$ variables is done.
Assume $M$ is embedded in an ambient space $E$. In an orthonormal coordinate system,
$R_{ijkl}(x)$ is the index expectation of the probability space $\Omega$ of linear functions
$f_a$ of functions with in $\{ x_m=0, m \neq i,j \}$
on the surface $\exp(\Sigma_{ij})$ where $\Sigma_{ij}$ is the plane in $T_xM$ spanned
by $e_i,e_j$.

\paragraph{}
The classical Gauss-Bonnet-Chern theorem can be proven similarly to what we did here
following footsteps of earlier work by Fenchel and Allendoerfer or Allendoerfer-Weil 
\cite{AllendoerferWeil} to the generalized Gauss-Bonnet-Chern result
using Riemannian polyhedra, where $M$ is triangulated into a simplicial complex with small simplices.
Integral geometry in Allendoerfer-Weil uses tube methods developed of Herman Weyl \cite{Weyl1939}.
Index expectation does neither use tensor calculus, nor tube methods. 

\paragraph{}
It is too bad that $\gamma_d(M)$ is not coordinate independent and that one has to refer to $\gamma(M)$ for that.
In physics, coordinate independence is related to {\bf general covariance}. It is not a coincidence that
Herman Weyl who was so close to physics, started getting interested in topics related to 
invariance theory. Symmetries are pivotal in physics and
the later developments, also in particle physics confirmed this. Also, as Hopf pointed out, a
major motivation for his interest in this topic of curvature in Riemannian geometry 
was due to its relevance to physics. Hopf had been 19 years old, when 
general relativity appeared in 1915. Index expectation is an intuitive tensor-free approach to curvature. If curvature
is given as index expectation, then positive curvature means that more critical points with positive index appear in 
that area than critical points with negative index. This brings index expectation curvature close to the concept of 
``charge". For Morse functions, elementary charges in general either take values $1$ or $-1$. 

\paragraph{}
The sectional Gauss-Bonnet result suggests to look more generally at {\bf $k$-point correlation curvatures} 
of a $2d$-manifold. The {\bf 1-point correlation function} $\langle K_1 \rangle = \sum K_{ij}(x)$ running over
all coordinate planes is the expectation of sectional curvature at a point. 
The integral over $M$ is known as the {\bf Hilbert action}.
The curvature $K$ can be viewed as a {\bf $d$-point correlation curvature} $\langle K_1 K_2...,K_d \rangle$. 
For $d=2$, where we deal with $4$-manifolds, the curvature $K$ is a $2$-point correlation function
$\sum K_1 K_2$, where the sum is over all all coordinate frames. 
In some sense, we can see Gauss-Bonnet-Chern curvature $K_{GBC}(x)$ as
an expectation of the sum of pair-correlations of a fixed plane with various other planes. 

\section{More on the discrete-Continuum relation} 

\paragraph{}
Unexplored is still also whether one can estimate $\delta(M)=\gamma(M)-\chi(M)$ or 
$\delta_d(M)=\gamma_d(M)-\chi(M)$ in cases where 
a group of isometries acts on the Riemannian manifold $M$. Looking for such manifolds with symmetries
is a major line of attack for the Hopf conjecture. Motivations come from
results like the {\bf theorem of Hsiang and Klainer}  \cite{HsiangKleiner}
stating that if an isometric $\mathbb{T}^1$ action exists on a positive curvature $4$ manifold
then $M$ is either homeomorphic (diffeomorphic is only conjectured) 
to the 4-sphere $\mathbb{S}^4$, the projective 4-space $\mathbb{RP}^4$ or
the complex projective plane $\mathbb{CP}^2$. If $M$ is additionally simply connected, then this
reduces $\mathbb{S}^4$ or $\mathbb{CP}^2$. Still keeping simply-connected, but weakening the 
assumption to non-negative curvature adds $S^2 \times S^2$ and a connected sum of $\mathbb{CP}^2 \# \mathbb{CP}^2$. 
A theorem of Berger from 1961 \cite{Berger1961} 
shows that the isometric action on a positive curvature manifold must have a fixed point. 

\paragraph{}
In general, without any positive curvature assumption, there is also a quite classical {\bf theorem of Conner
and Kobayashi} which asserts that the fixed point set $M$ of any smooth circle action (without even needing 
the action to be an isometry with respect to any Riemannian metric on $M$) must be a smooth manifold 
$F$ within $M$ with $\chi(F)=\chi(M)$. Now, this is interesting also from the discrete point of view.
For any isomorphism $T$ of a simplicial complex, there is a Lefshetz fixed point 
theorem \cite{brouwergraph} which states that the Lefshetz number $L(G,T)$ is equal to the sum $\sum_{x \in G} i_T(x)$
of indices. Now, since $L(G,T)$ the super trace of the induced action on cohomology is for the identity $L(G,Id)=\chi(G)$ 
and $i_T(x) = \omega(x)$ for the identity, this reduces to the definition of Euler characteristic in the case of the
identity. 

\paragraph{}
Now, if we think of a manifold as composed of infinitesimal simplices (this is natural from 
the non-standard point of view for example, where simplices have in general much higher dimension than the manifold
but where the simplicial complex constructed from an equivalence relation coming from an infinitesimal distance $h>0$ 
is homotopic to the manifold), the discrete result immediately suggests the corresponding {\bf Lefshetz theorem} in the
continuum. Now, in the case of a circle action $T_t$, the Lefshetz number $L(G,T_t)$ has to be $\chi(G)$
and the sum of the indices of the fixed points $\sum_{x \in G} i_T(x)$ becomes the Euler characteristic of the 
fixed point set. So, the Conner-Kobayashi theorem can be seen as a classical manifestation of the Lefshetz fixed
point theorem for simplicial complexes. 

\paragraph{}
What happens in the case of a positive curvature $2d$-manifold which admits a circular action of isometries?
The only missing brick from the following statement is the issue of even co-dimension, but it would 
allow to {\bf boot-strap the Hopf conjecture} from higher dimensions to a lower dimensional part. 

\quest{
Is the {\bf fixed point manifold} $F$ of an isometric circle action different from the identity 
on an even-dimensional positive curvature manifold an {\bf even dimensional} 
positive curvature manifold? We know it has the same Euler characteristic by  Conner
and Kobayashi.
}

\paragraph{}
A stronger version of that question even would ask whether the fixed point manifold 
$F$ admits a circle action (if it has positive dimension) which would answer the Hopf 
conjecture for positive curvature manifolds admitting an isometric circle action. 
The intuition is that that one could speed up the Killing vector 
field near the fixed point set $F$ so that the vectors remain have length $1$ and then
in the limit produce so a Killing vector field on the fixed point manifold $F$. 
That would then allow induction in dimension in the class of positive curvature manifolds 
admitting a Killing vector field. Since things are settled in dimension $2$ (and $4$) 
that would settle it over all for all dimensions. 

\paragraph{}
A simple example is a rotationally symmetric $2d$-sphere rotated around a line in 
$\mathbb{R}^{2d+1}$, where the fixed point set consists of a $0$-sphere, which are 2 points. 
If the question (even without the strong assumption of having an induced flow on $F$) 
is true then the Hopf conjecture would be settled in particular
for positive curvature 6-manifolds
admitting a circle action, the reason being that all positive curvature 
$4$ and $2$ manifolds are known to have positive Euler characteristic
and for 0-dimensional manifolds positive Euler characteristic follows from 
the fact that the fixed point set can not be empty, by Berger's theorem \cite{Berger1961}.

\bibliographystyle{plain}

\end{document}